\title[Una visi\'on local de los grupos finitos]{Una visi\'on local de los grupos finitos}
\author{Jos\'e Cantarero}
\thanks{}
\address{
\hfill\break Centro de Investigaci\'on en Matem\'aticas, A.C. Unidad M\'erida \\
\hfill\break Parque Cient\'ifico y Tecnol\'ogico de Yucat\'an \\ 
\hfill\break Carretera Sierra Papacal--Chuburn\'a Puerto Km 5.5 \\
\hfill\break Sierra Papacal, M\'erida, YUC 97302 \\
\hfill\break Mexico.
\hfill\break {\emph{Email address: }}{\tt cantarero@cimat.mx}}
\newcommand{\ab}{\operatorname{ab}\nolimits}
\newcommand{\Aut}{\operatorname{Aut}\nolimits}
\newcommand{\cc}{\operatorname{cc}\nolimits}
\newcommand{\Co}{\operatorname{Co}\nolimits}
\newcommand{\Dic}{\operatorname{Dic}\nolimits}
\newcommand{\Ext}{\operatorname{Ext}\nolimits}
\newcommand{\Hom}{\operatorname{Hom}\nolimits}
\newcommand{\Inn}{\operatorname{Inn}\nolimits}
\newcommand{\Mor}{\operatorname{Mor}\nolimits}
\newcommand{\Out}{\operatorname{Out}\nolimits}
\newcommand{\Rep}{\operatorname{Rep}\nolimits}
\newcommand{\RV}{\operatorname{RV}\nolimits}
\newcommand{\SD}{\operatorname{SD}\nolimits}
\newcommand{\SL}{\operatorname{SL}\nolimits}
\newcommand{\Tor}{\operatorname{Tor}\nolimits}
\def \F{{\mathbb F}}
\def \G{{\mathcal G}}
\def \N{{\mathbb N}}
\def \Qa{{\mathcal Q}}
\def \Q{{\mathbb Q}}
\def \R{{\mathbb R}}
\def \Z{{\mathbb Z}}
\newcommand{\higherlim}[2]{\displaystyle\setbox1=\hbox{\rm lim}
	\setbox2=\hbox to \wd1{\leftarrowfill} \ht2=0pt \dp2=-1pt
	\setbox3=\hbox{$\scriptstyle{#1}$}
	\def\test{#1}\ifx\test\empty
	\mathop{\mathop{\vtop{\baselineskip=5pt\box1\box2}}}\nolimits^{#2}
	\else
	\ifdim\wd1<\wd3
	\mathop{\hphantom{^{#2}}\vtop{\baselineskip=5pt\box1\box2}^{#2}}_{#1}
	\else
	\mathop{\mathop{\vtop{\baselineskip=5pt\box1\box2}}_{#1}}%
	\nolimits^{#2}
	\fi\fi}
	\newcommand{\highercolim}[2]{\displaystyle\setbox1=\hbox{\rm lim}
	\setbox2=\hbox to \wd1{\rightarrowfill} \ht2=0pt \dp2=-1pt
	\setbox3=\hbox{$\scriptstyle{#1}$}
	\def\test{#1}\ifx\test\empty
	\mathop{\mathop{\vtop{\baselineskip=5pt\box1\box2}}}\nolimits^{#2}
	\else
	\ifdim\wd1<\wd3
	\mathop{\hphantom{^{#2}}\vtop{\baselineskip=5pt\box1\box2}^{#2}}_{#1}
	\else
	\mathop{\mathop{\vtop{\baselineskip=5pt\box1\box2}}_{#1}}%
	\nolimits^{#2}
	\fi\fi}
\newcommand{\Ff}{{\mathcal{F}}}
\newcommand{\Ll}{{\mathcal{L}}}
\newcommand{\pcom}{^\wedge_p}
\theoremstyle{plain}
\newtheorem*{teoremaAlperin}{Teorema de fusi\'on de Alperin-Goldschmidt}
\newtheorem*{teoremaCartan}{Teorema de Cartan-Eilenberg}
\newtheorem{theorem}{Teorema}[section]
\newtheorem{proposition}[theorem]{Proposici\'on}
\newtheorem{corollary}[theorem]{Corolario}
\newtheorem{lemma}[theorem]{Lema}
\theoremstyle{definition}
\newtheorem*{property}{Propiedad global}
\newtheorem*{localproperty}{Propiedad local}
\newtheorem{definition}[theorem]{Definici\'on}
\newtheorem{remark}[theorem]{Observaci\'on}
\newtheorem{example}[theorem]{Ejemplo}
\keywords{Subgrupos de Sylow, sistemas de fusi\'on}
\subjclass{20D20, 55R35}
\begin{document}

\begin{abstract}
Este art\'iculo contiene una introducci\'on b\'asica al estudio local de grupos finitos, incluyendo
una breve perspectiva de la teor\'ia de sistemas de fusi\'on y grupos $p$-locales finitos.
\end{abstract}

\maketitle

%%%%%%%%%%%%%%%%%%%%%%%%%%%%%%%%%%%%%%%%%%%%%%%%%%%%%%%%%%%%%%%%%%%%
\section*{Introducci\'on}
%%%%%%%%%%%%%%%%%%%%%%%%%%%%%%%%%%%%%%%%%%%%%%%%%%%%%%%%%%%%%%%%%%%%

Los fen\'omenos de simetr\'ia se pueden modelar matem\'aticamente usando acciones
de grupos. Configuraciones con un n\'umero finito de simetr\'ias abundan en diversas
ciencias, incluyendo matem\'aticas, por lo cual es \'util comprender la estructura 
de los grupos finitos.

Una manera de llevar esto a cabo es intentar clasificarlos. Los grupos finitos se pueden
construir mediante extensiones a partir de grupos finitos simples, as\'i que un primer paso
en esta direcci\'on es la clasificaci\'on de grupos finitos simples, que se conoce. Sin embargo, 
permanece el problema de determinar las posibles extensiones.

Una alternativa es clasificar las familias de grupos finitos que aparecen como simetr\'ias
en el fen\'omeno de inter\'es. Por ejemplo, se pueden clasificar los grupos finitos de reflexi\'on
complejos o los grupos finitos que act\'uan libremente sobre alguna esfera. Otra alternativa es
usar una relaci\'on de equivalencia m\'as d\'ebil que isomorfismo, pero que contenga la informaci\'on
que queremos. Seguiremos este \'ultimo camino, mediante una filosof\'ia de localizaci\'on.

Localizar consiste en reducir un problema a varios problemas locales. En una primera aproximaci\'on, estos 
problemas locales se obtienen fijando un n\'umero primo y reinterpretando el problema original con objetos 
cuya informaci\'on est\'a concentrada en ese primo. Tambi\'en existen localizaciones m\'as generales, por
ejemplo con respecto a una teor\'ia de cohomolog\'ia o a un morfismo.

Los grupos abelianos finitos se pueden analizar usando las localizaciones en primos de la teor\'ia de grupos
abelianos. En la Secci\'on \ref{section:abelianos} introducimos la noci\'on de equivalencia $p$-local entre grupos abelianos finitos basada en estas localizaciones. Gracias a
la descomposici\'on primaria, la localizaci\'on en el primo $p$ de un grupo abeliano es isomorfa a su $p$-subgrupo 
de Sylow y este es el punto de vista que se puede generalizar a grupos finitos no necesariamente abelianos.

El estudio local de un grupo finito $G$ no solo utiliza un $p$-subgrupo de Sylow $S$, sino tambi\'en las $G$--subconjugaciones (ver Definici\'on \ref{subconjugacion})
entre subgrupos de $S$. En la Secci\'on \ref{section:finitos} extendemos la definici\'on de equivalencia $p$-local para grupos
finitos y describimos invariantes $p$-locales, que nos permiten descartar que dos grupos sean $p$-localmente equivalentes, junto con ejemplos.
Tambi\'en estudiamos la relaci\'on entre ser isomorfos y ser $p$-localmente equivalentes para todo primo $p$, probando que estos
dos conceptos coinciden para grupos nilpotentes, pero no en general.

En la Secci\'on \ref{section:fusion} vemos que la informaci\'on $p$-local de un grupo finito $G$ con $p$-subgrupo de Sylow $S$ se
puede resumir en una categor\'ia llamada el sistema de fusi\'on de $G$ relativo a $S$, de tal manera que dos grupos finitos son
$p$-localmente equivalentes si y solo si sus sistemas de fusi\'on son isomorfos en cierto sentido. Esto nos permite encontrar 
nuevos invariantes $p$-locales, aquellos que se puedan construir a partir del sistema de fusi\'on, como por ejemplo la homolog\'ia
y cohomolog\'ia de grupos con coeficientes en el campo $\F_p$ con $p$ elementos.

El resto del art\'iculo est\'a dedicado a relacionar el estudio $p$-local de los grupos finitos con la homotop\'ia de la $p$-completaci\'on
de sus espacios clasificantes. En la Secci\'on \ref{section:clasificante} damos primero una introducci\'on a los espacios clasificantes de grupos
finitos y la $p$-completaci\'on de espacios, para despu\'es hablar sobre la Conjetura de Martino-Priddy (Teorema \ref{MartinoPriddy} en el texto)
que proporciona la relaci\'on deseada. Finalmente, en la Secci\'on \ref{section:grupoplocal} hablamos sobre algunas de las motivaciones de la teor\'ia de 
grupos $p$-locales finitos.

En este art\'iculo proveemos una visi\'on introductoria del estudio $p$-local de grupos finitos que presupone familiaridad con teor\'ia de grupos, pero
no necesariamente con topolog\'ia algebraica. Es por ello que las tres primeras secciones contienen argumentos y ejemplos detallados, 
mientras que las dos \'ultimas son m\'as superficiales. El contenido est\'a basado en un minicurso impartido por el autor, que se
puede consultar en \cite{Can}. El lector interesado en profundizar en estos temas puede leer los art\'iculos panor\'amicos \cite{AO}, \cite{Broto}, \cite{Cr}, dirigirse a los libros \cite{AKO}, \cite{Li} o al art\'iculo \cite{BLO1}.

\subsection*{Agradecimientos}
La elaboraci\'on de este art\'iculo fue apoyada por CONAHCYT en el a\~no 2023 mediante el 
proyecto de Ciencia de Frontera CF-2023-I-2649.

\section{La descomposici\'on primaria de grupos abelianos finitos}
\label{section:abelianos}

Para cada grupo abeliano finito $A$, existe una descomposici\'on
\[ A = \bigoplus_{p \text{ primo}} A_{(p)}, \]
donde cada $A_{(p)}$ es un $p$-grupo abeliano finito y $A_{(q)}$ es el grupo trivial
para cada primo $q$ que no divida al orden de $A$. A esto se le conoce como la descomposici\'on 
primaria de $A$ y al grupo $A_{(p)}$ se le llama la componente $p$-primaria de $A$. Adem\'as, para cada $p$ se tiene
\[ A_{(p)} \cong \bigoplus_{k=1}^r \Z/p^{m_k}, \]
para ciertos $m_k \geq 0$. Esto determina $A$ salvo isomorfismo pues dos grupos abelianos finitos 
son isomorfos si y solo si tienen exactamente los mismos sumandos en esta descomposici\'on salvo
reordenaci\'on. Esto constituye el teorema de clasificaci\'on de grupos abelianos finitos, que 
corresponde al Teorema 6.1 y al Teorema 6.13 en \cite{R}.

\begin{remark}
La componente $p$-primaria de $A$ est\'a compuesta de los elementos de $A$ cuyo orden es una potencia de $p$, pero
tambi\'en la podemos obtener a partir de $A$ mediante
\[ A_{(p)} = A \otimes_{\Z} \Z_{(p)}, \]
donde $\Z_{(p)}$ es la localizaci\'on de $\Z$ en el primo $p$. Usaremos la notaci\'on $A \pcom$ debido a que
\[ A \pcom = A \otimes_{\Z} \Z \pcom \cong A_{(p)}, \]
donde $\Z \pcom$ es el anillo de enteros $p$-\'adicos.
\end{remark}

Esta descomposici\'on nos permite reducir problemas sobre grupos abelianos finitos a
$p$-grupos abelianos finitos, siendo este nuestro primer contexto donde podemos aplicar
la filosof\'ia de localizaci\'on. Por ejemplo, consideremos las siguientes propiedades.

\begin{property}
Los grupos abelianos finitos $A$ y $B$ son isomorfos.
\end{property}

\begin{localproperty}
Sea $p$ un primo. Las componentes $p$-primarias de $A$ y $B$ son isomorfas.
\end{localproperty}

?`C\'omo podemos saber si esta propiedad local es adecuada para analizar la propiedad original? Para empezar,
puesto que $ - \otimes_{\Z} \Z \pcom$ es un funtor, se tiene
\[ A \cong B  \Longrightarrow A \pcom \cong B \pcom . \]
Es decir, la propiedad global implica la propiedad local. El converso no es cierto, por ejemplo los grupos $\Z/2$ y $\Z/2 \oplus \Z/3$ no son isomorfos,
pero
\[ (\Z/2) ^{\wedge}_2 \cong \Z/2 \cong (\Z/2 \oplus \Z/3) ^{\wedge}_2. \]
Sin embargo, si suponemos que $A \pcom \cong B \pcom$ para todo primo $p$, entonces por la clasificaci\'on de grupos abelianos 
finitos se tiene $A \cong B$. 

\begin{proposition}
\label{LocalAGlobalAbeliano}
Dos grupos abelianos finitos $A$ y $B$ son isomorfos si y solo si $A \pcom \cong B \pcom$ para todo primo $p$.
\end{proposition}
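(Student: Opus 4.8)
El plan es probar por separado las dos implicaciones, apoy\'andonos en la descomposici\'on primaria y en la Observaci\'on anterior que identifica $A\pcom$ con la componente $p$-primaria $A_{(p)}$. La implicaci\'on directa ya se coment\'o antes del enunciado: como $-\otimes_{\Z}\Z\pcom$ es un funtor, aplicado a un isomorfismo $A\cong B$ produce un isomorfismo $A\pcom\cong B\pcom$ para todo primo $p$; basta por tanto dejarlo registrado.

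Para el converso, supongamos $A\pcom\cong B\pcom$ para todo primo $p$. Por la Observaci\'on, $A\pcom\cong A_{(p)}$ y $B\pcom\cong B_{(p)}$, de modo que para cada $p$ la componente $p$-primaria de $A$ es isomorfa a la de $B$; fijemos un isomorfismo $\varphi_p\colon A_{(p)}\to B_{(p)}$. Como $A_{(q)}$ y $B_{(q)}$ son triviales para todo primo $q$ que no divida a $|A|\cdot|B|$, los \'indices con $A_{(p)}$ o $B_{(p)}$ no trivial forman un conjunto finito.

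Finalmente, ensamblamos los $\varphi_p$: puesto que $A=\bigoplus_{p}A_{(p)}$ y $B=\bigoplus_{p}B_{(p)}$ son sumas directas (esencialmente finitas), la suma directa $\bigoplus_p\varphi_p$ es un isomorfismo $A\to B$, lo que concluye la prueba. Una ruta equivalente emplea el teorema de clasificaci\'on: por su parte de unicidad, $A_{(p)}\cong B_{(p)}$ fuerza que ambos tengan, salvo reordenaci\'on, los mismos sumandos c\'iclicos $\Z/p^{m}$, de modo que las descomposiciones primarias completas de $A$ y $B$ coinciden salvo reordenaci\'on y por tanto $A\cong B$. No anticipo obst\'aculos de fondo en ninguna de las dos v\'ias; la \'unica sutileza, enteramente rutinaria, es recordar que una suma directa finita de isomorfismos es un isomorfismo, o equivalentemente que la descomposici\'on primaria es natural.
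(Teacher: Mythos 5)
Tu propuesta es correcta. La demostraci\'on del art\'iculo es esencialmente el p\'arrafo que precede al enunciado: la implicaci\'on directa por funtorialidad de $-\otimes_{\Z}\Z\pcom$ (igual que en tu texto) y el converso invocando directamente el teorema de clasificaci\'on de grupos abelianos finitos, que es exactamente tu ``ruta equivalente''. Tu v\'ia principal es una ligera variante: en lugar de apelar a la unicidad de los sumandos c\'iclicos, identificas $A\pcom\cong A_{(p)}$ y $B\pcom\cong B_{(p)}$, eliges isomorfismos $\varphi_p$ y los ensamblas sobre la descomposici\'on primaria can\'onica $A=\bigoplus_p A_{(p)}$, $B=\bigoplus_p B_{(p)}$ (sumas esencialmente finitas). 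Esto compra algo de econom\'ia: solo usa la existencia y canonicidad de la descomposici\'on primaria (las componentes $p$-primarias son los elementos de orden potencia de $p$, de modo que el ensamblaje tiene sentido), mientras que la ruta del art\'iculo descarga todo el trabajo en el enunciado m\'as fuerte de clasificaci\'on, que ya estaba citado en la secci\'on y por eso resulta m\'as breve en el texto. No hay ninguna laguna en tu argumento.
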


Este resultado es satisfactorio pues reduce la certeza de una propiedad a que se cumplan varias propiedades
locales. Se podr\'ia mejorar ligeramente, pues al ser $A$ y $B$ finitos, solo existen un n\'umero finito de
primos que dividen a los \'ordenes de $A$ \'o $B$. As\'i que ser\'ia suficiente comprobar que $A \pcom \cong B \pcom$
para todo primo $p$ que divida el orden de $A$ o el orden de $B$. La Proposici\'on \ref{LocalAGlobalAbeliano} 
nos inspira a introducir la siguiente relaci\'on de equivalencia.

\begin{definition}
\label{EquivalenciapLocalAbelianos}
Sean $A$ y $B$ grupos abelianos finitos. Una equivalencia $p$-local de $A$ a $B$ es un isomorfismo $A \pcom \to B \pcom$.
\end{definition} 

Si existe una equivalencia $p$-local entre $A$ y $B$, diremos que son $p$-localmente equivalentes y denotaremos $A \simeq_p B$. 
La Proposici\'on \ref{LocalAGlobalAbeliano} nos dir\'ia entonces 
\[ A \cong B \Longleftrightarrow A \simeq_p B \quad \text{para todo primo $p$.} \]
La noci\'on de equivalencia $p$-local es m\'as d\'ebil que isomorfismo. Los grupos $\Z/2$ y $\Z/2 \oplus \Z/3$ no
son isomorfos, pero son $2$-localmente equivalentes.

\begin{definition}
Sea $F$ una asignaci\'on que a cada grupo abeliano finito $A$ lo env\'ia a un objeto $F(A)$ de cierta categor\'ia. Diremos
que $F$ es un invariante $p$-local si $A \simeq_p B$ implica $F(A) \cong F(B)$. Diremos que es un invariante $p$-local completo
si se tiene que $A \simeq_p B$ si y solo si $F(A) \cong F(B)$. 
\end{definition}

En esta definici\'on la categor\'ia podr\'ia ser discreta, es decir, sin morfismos distintos de las identidades. Esto permite
tener invariantes que toman valores en enteros, por ejemplo. He aqu\'i algunos ejemplos inmediatos de invariantes $p$-locales.

\begin{itemize}
\item La m\'axima potencia de $p$ que divide al orden de $A$.
\item El grupo abeliano $\Hom(\Z/p,A)$.
\item La cardinalidad del conjunto de elementos de orden $p^2$.
\end{itemize}

Similarmente, si $\Qa$ es una propiedad que pueden cumplir los grupos abelianos finitos, diremos que es una propiedad $p$-local
si cada vez que se tenga $A \simeq_p B$, entonces $A$ satisface $\Qa$ si y solo si $B$ satisface $\Qa$. En realidad, una propiedad
$p$-local $\Qa$ corresponde al invariante $p$-local que a cada grupo $A$ le asigna $1$ si satisface $\Qa$ y $0$ en otro caso. Por
ejemplo, las siguientes son propiedades $p$-locales.

\begin{itemize}
\item Tener alg\'un elemento de orden $p$.
\item Tener orden primo relativo con $p$.
\end{itemize}

Los invariantes y propiedades $p$-locales nos sirven para descartar si dos grupos son $p$-localmente equivalentes, pues si $F$ es un
invariante $p$-local, se cumple
\[ F(A) \ncong F(B) \Longrightarrow A \not\simeq_p B. \]
Afortunadamente, gracias a la descomposici\'on primaria, existe un invariante $p$-local completo conocido como los coeficientes de $p$-torsi\'on.
Sea $A$ un grupo abeliano finito tal que 
\[ A \pcom \cong \bigoplus_{k >0} (\Z/p^k)^{n_k}. \]
Los coeficientes de $p$-torsi\'on de $A$ est\'an dados por una tupla finita definida de la siguiente manera inductiva. Sea $k$ el m\'inimo
natural tal que $n_k \neq 0$. Entonces ponemos $n_k$ copias de $p^k$ en la tupla. Repetimos el proceso con los elementos de $\N$ mayores que $k$ 
y en cada paso ponemos las copias correspondientes a la derecha de los elementos ya asentados. Otra manera de expresar la misma informaci\'on es mediante la funci\'on
\begin{align*}
f_A \colon \N & \to \N \cup \{ 0 \},\\
            k & \mapsto n_k ,
\end{align*}
que se\~nala las multiplicidades de los $\Z/p^k$ en la descomposici\'on primaria de $A$. Por ejemplo, los coeficientes de $2$-torsi\'on de $(\Z/2)^3  \oplus \Z/12$ est\'an
dados por la tupla $(2,2,2,2^2)$ o por la funci\'on $\N \to \N \cup \{ 0 \}$ que env\'ia $1$ a $3$, que env\'ia $2$ a $1$ y todos los dem\'as naturales a cero. Por supuesto,
todos los coeficientes de $p$-torsi\'on juntos forman los coeficientes de torsi\'on que forman un invariante completo para isomorfismos de grupos abelianos finitos.

\section{Equivalencias $p$-locales de grupos finitos}
\label{section:finitos}

El estudio $p$-local de grupos finitos no necesariamente abelianos es m\'as sutil
porque en general los grupos finitos no se rompen como productos de $p$-grupos finitos. 

\begin{definition}
\label{subconjugacion}
Sean $P$ y $Q$ subgrupos de $G$. Decimos que un homomorfismo $f \colon P \to Q$ es una $G$--subconjugaci\'on
si existe $g \in G$ tal que $f(x) = gxg^{-1}$ para todo $x \in P$. Si adem\'as $f$ es un isomorfismo, diremos
que es una $G$--conjugaci\'on.
\end{definition}

En cualquiera de los dos casos, denotamos $f=c_g$. Si existe una $G$--conjugaci\'on entre dos subgrupos de $G$,
diremos que son $G$--conjugados. Recordemos que cualquier grupo finito tiene $p$-subgrupos maximales, 
llamados $p$-subgrupos de Sylow, y que cualesquiera dos de estos son $G$--conjugados.

\begin{definition}
Sean $G$, $H$ grupos finitos y $p$ un primo. Se dice que $G$ es $p$-localmente equivalente
a $H$ si existe un isomorfismo $\alpha \colon S \to R$ entre ciertos $p$-subgrupos de Sylow 
$S$, $R$ de $G$ y $H$, respectivamente, que satisface:
\begin{itemize}
\item Para cada par de subgrupos $P$, $Q \leq S$ y cada homomorfismo $f \colon P \to Q$, se tiene que $f$ es
una $G$--subconjugaci\'on si y solo si $\alpha f \alpha^{-1}$ es una $H$--subconjugaci\'on.
\end{itemize}
\end{definition}

A $\alpha$ se le llama un isomorfismo que preserva fusi\'on o una equivalencia $p$-local. La condici\'on de la definici\'on es equivalente 
a pedir que $c_{\alpha}$ env\'ie $G$--subconjugaciones a $H$--subconjugaciones y que $c_{\alpha^{-1}}$ env\'ie $H$--subconjugaciones a $G$--subconjugaciones. 
Si $G$ es $p$-localmente equivalente a $H$, denotamos $G \simeq_p H$. Algunas propiedades de equivalencias $p$-locales que son inmediatas de la definici\'on son las siguientes.
\begin{enumerate}
\item La composici\'on de equivalencias $p$-locales es una equivalencia $p$-local.
\item Si $G$ es $p$-localmente equivalente a $H$ y $G'$ es $p$-localmente equivalente a $H'$, entonces $G \times G'$ es $p$-localmente equivalente
a $H \times H'$.
\item Dos $p$-grupos finitos son $p$-localmente equivalentes si y solo si son isomorfos.
\end{enumerate}
Si $G$ y $H$ son grupos abelianos finitos, las \'unicas subconjugaciones son las inclusiones, as\'i que
cualquier isomorfismo entre sus $p$-subgrupos de Sylow autom\'aticamente preserva fusi\'on. Pero adem\'as, sus $p$-subgrupos de Sylow
son isomorfos a $G \pcom$ y $H \pcom$, respectivamente. Con lo cual estamos extendiendo la Definici\'on \ref{EquivalenciapLocalAbelianos}.

\begin{remark}
Si existe un isomorfismo $\alpha \colon S \to R$ como en la definici\'on anterior y $S'$ es otro $p$-subgrupo de Sylow de $G$, existe
$g \in G$ tal que $S=gS'g^{-1}$ y entonces podemos considerar la composici\'on $ \alpha c_g \colon S' \to R$, que tambi\'en es un isomorfismo.
Dados $P$, $Q \leq S'$ y un homomorfismo $f \colon P \to Q$, se tiene
\[ \alpha c_g f (\alpha c_g)^{-1} = \alpha c_g f c_g^{-1} \alpha^{-1} = \alpha c_g f c_{g^{-1}} \alpha^{-1}. \]
Notemos que $f$ es una $G$--conjugaci\'on si y solo si lo es $c_g f c_{g^{-1}}$. Como $\alpha$ preserva fusi\'on, se tiene que $c_g f c_{g^{-1}}$
es una $G$--conjugaci\'on si y solo si $\alpha c_g f c_{g^{-1}} \alpha^{-1}$ es una $G'$--conjugaci\'on. Hemos probado que $\alpha c_g$ tambi\'en
es un isomorfismo que preserva fusi\'on. Un argumento similar funcionar\'ia si elegimos otro $p$-subgrupo de Sylow de $G'$, as\'i que si existe un
isomorfismo que preserva fusi\'on entre ciertos $p$-subgrupos de Sylow, tambi\'en lo existe para cualesquiera $p$-subgrupos de Sylow.
\end{remark}

\begin{example}
\label{SigmaTres2Nilpotente}
Veamos que $\Z/2 \simeq_2 \Sigma_3$. Tomemos los $2$-subgrupos de Sylow $S=\Z/2$ y $ R = \{ 1, (1,2) \}$ para $\Z/2$ y $\Sigma_3$, respectivamente.
Consideremos el isomorfismo $\alpha \colon S \to R$ que env\'ia $[1]_2$ a $(1,2)$. Sea $f \colon P \to Q$ un homomorfismo entre subgrupos de $S$.
Notemos que $f$ es el homomorfismo trivial si y solo si lo es $ \alpha f \alpha^{-1}$. El homomorfismo trivial es una $\Z/2$--subconjugaci\'on si y solo si
$P \leq Q$, lo cual ocurre si y solo si $\alpha(P) \leq \alpha(Q)$. Solo resta comprobar el caso de $ 1_S \colon S \to S$, que es una $\Z/2$--subconjugaci\'on,
para el cual $\alpha 1_S \alpha^{-1} = 1_R$, que es una $\Sigma_3$--subconjugaci\'on.
\end{example}

\begin{example}
\label{IsomorphicSylows}
Veamos que $ \Z/3 \not\simeq_3 \Sigma_3 $. Tomamos $S=\Z/3$ y $R = \{ 1,(1,2,3),(1,3,2) \}$ como sus $3$-subgrupos de Sylow. Hay dos posibles isomorfismos $S \to R$. El primero es el isomorfismo
$\alpha$ que env\'ia $[1]_3$ a $(1,2,3)$. Sea $f \colon S \to S$ el homomorfismo dado por $f([m]_3) = [2m]_3$, que no es una $\Z/3$--conjugaci\'on. Entonces $\alpha f \alpha^{-1}$
es el \'unico homomorfismo que env\'ia $(1,2,3)$ a $(1,3,2)$, que es la conjugaci\'on por $(1,2)$. As\'i que $\alpha$ no preserva fusi\'on. El otro isomorfismo $\beta$
env\'ia $[1]_3$ a $(1,3,2)$, pero igualmente cumple que $\beta f \beta^{-1}$ es conjugaci\'on por $(1,2)$. 
\end{example}

Estos dos ejemplos fueron f\'aciles de tratar manualmente pues los $p$-subgrupos de Sylow ten\'ian pocos subgrupos y hab\'ia pocos
homomorfismos entre ellos, pero en general este m\'etodo ser\'ia ineficiente. As\'i que usaremos invariantes $p$-locales, que al 
igual que en el caso de grupos abelianos finitos, es una asignaci\'on que env\'ia grupos $p$-localmente equivalentes a objetos
isomorfos. Comencemos listando un par de invariantes $p$-locales sencillos.

\begin{enumerate}
\item La m\'axima potencia de $p$ que divide el orden del grupo.
\item El $p$-subgrupo de Sylow del grupo.
\end{enumerate}

Ya vimos en el Ejemplo \ref{IsomorphicSylows} dos grupos finitos que no son $3$-localmente equivalentes con $3$-subgrupos de Sylow isomorfos. Tiene
sentido que esto pueda pasar pues la definici\'on de equivalencia $p$-local no solo requiere un isomorfismo, sino que tambi\'en preserve fusi\'on. 
Para obtener un mejor invariante, necesitamos considerar c\'omo un grupo finito conjuga los elementos de su $p$-subgrupo de Sylow. Esto se puede apreciar por ejemplo en el conjunto
\[ \cc_p(G) = \{ \text{ clases de conjugaci\'on de elementos de $G$ de orden una potencia de $p$ } \}. \]
Dados elementos $x$, $y$ de un $p$-subgrupo de Sylow $S$ de $G$, diremos que $x \sim_G y$ si son $G$--conjugados. El orden de los elementos de $S$
es una potencia de $p$, as\'i que hay una funci\'on natural
\begin{align*}
S/{\sim_G} & \to \cc_p(G), \\
[s] & \mapsto [s],
\end{align*}
que es claramente inyectiva. Por otra parte, si el orden de $x \in G$ es una potencia de $p$, el subgrupo
generado por $x$ es un $p$-grupo. Por lo tanto, existe $g \in G$ tal que $gxg^{-1} \in S$ y entonces 
$[x]=[gxg^{-1}]$ est\'a en la imagen.

\begin{proposition}
\label{ClasesConjugacion}
Si $G$ y $H$ son $p$-localmente equivalentes, hay una biyecci\'on entre $\cc_p(G)$ y $\cc_p(H)$.
\end{proposition}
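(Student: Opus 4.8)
El plan es aprovechar la biyección natural $S/{\sim_G} \to \cc_p(G)$ descrita justo antes del enunciado (y su análoga $R/{\sim_H} \to \cc_p(H)$), y mostrar que una equivalencia $p$-local $\alpha \colon S \to R$ induce una biyección $S/{\sim_G} \to R/{\sim_H}$; componiendo las tres se obtiene la biyección buscada entre $\cc_p(G)$ y $\cc_p(H)$.

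Primero traduciría la relación $\sim_G$ a lenguaje de subconjugaciones. Para $x, y \in S$ afirmo que $x \sim_G y$ si y solo si existe un homomorfismo $f \colon \langle x \rangle \to \langle y \rangle$ con $f(x) = y$ que sea una $G$--subconjugación. En efecto, si $gxg^{-1} = y$ entonces $g\langle x \rangle g^{-1} = \langle y \rangle$ y $c_g$ restringe a un homomorfismo $\langle x \rangle \to \langle y \rangle$ con la propiedad pedida; recíprocamente, si $f = c_g$ cumple $f(x) = y$, entonces $gxg^{-1} = y$, es decir $x \sim_G y$. (La misma equivalencia vale para $\sim_H$ con subgrupos de $R$.)

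Después usaría que $\alpha$ es un isomorfismo de grupos, de modo que $\alpha(\langle x \rangle) = \langle \alpha(x) \rangle$ para todo $x \in S$, y que si $f \colon \langle x \rangle \to \langle y \rangle$ satisface $f(x) = y$, entonces $\alpha f \alpha^{-1} \colon \langle \alpha(x) \rangle \to \langle \alpha(y) \rangle$ satisface $(\alpha f \alpha^{-1})(\alpha(x)) = \alpha(f(x)) = \alpha(y)$. Como $\alpha$ preserva fusión, $f$ es una $G$--subconjugación si y solo si $\alpha f \alpha^{-1}$ es una $H$--subconjugación. Combinando esto con el paso anterior se obtiene $x \sim_G y \Longleftrightarrow \alpha(x) \sim_H \alpha(y)$ para todos $x, y \in S$. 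Esta equivalencia dice exactamente que $\alpha$ desciende a una aplicación bien definida e inyectiva $\bar{\alpha} \colon S/{\sim_G} \to R/{\sim_H}$; como $\alpha \colon S \to R$ es sobreyectiva, $\bar{\alpha}$ también lo es, luego es una biyección.

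La parte que requiere más cuidado que dificultad es precisamente ese puente entre $\sim_G$, que es una relación sobre elementos, y la noción de subconjugación, que está formulada para homomorfismos entre subgrupos: el truco es restringir a los subgrupos cíclicos $\langle x \rangle$, $\langle y \rangle$ generados por los elementos en cuestión. Hecho esto, no queda más que encadenar $\bar{\alpha}$ con las dos biyecciones $S/{\sim_G} \cong \cc_p(G)$ y $R/{\sim_H} \cong \cc_p(H)$.
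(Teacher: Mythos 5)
Tu propuesta es correcta y sigue esencialmente el mismo camino que la demostración del artículo: reducir a una biyección $S/{\sim_G} \to R/{\sim_H}$, usar los subgrupos cíclicos $\langle x \rangle$, $\langle y \rangle$ para traducir la $G$--conjugación de elementos a subconjugaciones, y aplicar que $\alpha$ preserva fusión. La única diferencia es cosmética: el artículo verifica la buena definición y exhibe la inversa $[y] \mapsto [\alpha^{-1}(y)]$, mientras que tú obtienes inyectividad del bicondicional $x \sim_G y \Leftrightarrow \alpha(x) \sim_H \alpha(y)$ y sobreyectividad de la de $\alpha$.
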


\begin{proof}
Escojamos $p$-subgrupos de Sylow $S$ y $R$ de $G$ y $H$, respectivamente. Por los comentarios previos
a la proposici\'on es suficiente encontrar una biyecci\'on entre $S/{\sim_G}$ y $R/{\sim_H}$. Sea 
$\alpha \colon S \to R$ un isomorfismo que preserva fusi\'on y definamos
\begin{align*}
S/{\sim_G} & \to R/{\sim_H}, \\
[x] & \mapsto [\alpha(x)].
\end{align*} 
Veamos que est\'a bien definida. Si $[x]=[y]$, entonces $ y = gxg^{-1}$ para alg\'un $g \in G$.
Sea $P$ el subgrupo generado por $x$ y $Q$ el subgrupo generado por $y$. Entonces $g$ define
una $G$--conjugaci\'on $ c_g \colon P \to Q$ y $\alpha c_g \alpha^{-1} \colon \alpha(P) \to
\alpha(Q)$ debe ser una $H$--subconjugaci\'on, digamos por $h \in H$. Es decir, 
\[ c_h(\alpha(x)) = \alpha c_g \alpha^{-1}(\alpha(x)) = \alpha c_g(x) = \alpha(y), \]
luego $[\alpha(x)] = [\alpha(y)]$. Esta funci\'on es biyectiva pues tiene una inversa 
que env\'ia $[y]$ a $[\alpha^{-1}(y)]$.
\end{proof}

\begin{example}
Veamos que los grupos $A_4$ y $D_{12}$ no son $2$-localmente equivalentes usando la Proposici\'on \ref{ClasesConjugacion}.
Un $2$-subgrupo de Sylow de $A_4$ es
\[ S = \{ 1, (1,2)(3,4), (1,4)(3,2), (1,3)(2,4) \}, \]
mientras que un $2$-subgrupo de Sylow de $D_{12}$ es
\[ R = \{ 1, r^3, s, sr^3 \}. \] 
Ambos subgrupos son isomorfos a $\Z/2 \oplus \Z/2$, as\'i que el tipo de isomorfismo
del $2$-subgrupo de Sylow no sirve en este caso para probar que no son $2$-localmente 
equivalentes. Los elementos no triviales de $S$ son $A_4$--conjugados:
\begin{gather*}
(1,2,3)(1,2)(3,4)(1,2,3)^{-1} = (1,4)(2,3), \\
(1,2,4)(1,2)(3,4)(1,2,4)^{-1} = (1,3)(2,4),
\end{gather*}
as\'i que $S/{\sim_{A_4}}$ tiene dos elementos. Sin embargo, $r^3$ pertenece al centro de $D_{12}$,
luego $R/{\sim_{D_{12}}}$ tiene al menos tres elementos. Concluimos que $A_4 \not\simeq_2 D_{12}$.
\end{example}

Para el siguiente invariante $p$-local, as\'i como para la siguiente secci\'on, necesitamos recordar 
varios conceptos de teor\'ia de grupos y su notaci\'on.

\begin{definition}
Sean $H$ y $K$ subgrupos de $G$.
\begin{enumerate}
\item El normalizador de $H$ en $G$ es
\[ N_G(H) = \{ g \in G \mid gHg^{-1} = H \}. \] 
\item El transportador de $H$ en $K$ es
\[ N_G(H,K) = \{ g \in G \mid gHg^{-1} \subseteq K \}. \]
\item El conjunto de $G$--subconjugaciones de $H$ en $K$ es
\[ \Hom_G(H,K) = \{ f \colon H \to K \mid f \text{ es una $G$--subconjugaci\'on } \}.  \]
\item El grupo de $G$--automorfismos de $H$ es
\[ \Aut_G(H) = \{ f \colon H \to H \mid f \text{ es una $G$--conjugaci\'on } \}.  \]
\item El grupo de automorfismos internos de $G$ es
\[ \Inn(G) = \Aut_G(G). \]
\item El grupo de $G$--automorfismos externos de $H$ es
\[ \Out_G(H) = \Aut_G(H)/\Inn(H). \]
\item El centralizador de $H$ en $G$ es
\[ C_G(H) = \{ g \in G \mid ghg^{-1} = h \text{ para todo } h \in H \}. \]
\item El centro de $G$ es
\[ Z(G) = C_G(G). \]
\end{enumerate}
\end{definition}

Puesto que las equivalencias $p$-locales preservan subconjugaciones, el siguiente resultado no
ser\'a sorprendente.

\begin{proposition}
Si $G \simeq_p H$ y $S$, $R$ son $p$-subgroups de Sylow de $G$ y $H$, respectivamente, entonces
$ \Aut_G(S) \cong \Aut_H(R)$.
\end{proposition}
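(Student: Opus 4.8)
The plan is to transport $G$--automorphisms of $S$ to $H$--automorphisms of $R$ by conjugating with a fusion--preserving isomorphism. By the remark following the definition of equivalencia $p$-local, since $G \simeq_p H$ there is an isomorphism $\alpha \colon S \to R$ that preserves fusi\'on for the Sylow subgroups $S$ and $R$ fixed in the statement. First I would define
\[
\Phi \colon \Aut_G(S) \to \Aut_H(R), \qquad \Phi(f) = \alpha f \alpha^{-1},
\]
and check that it is well defined, that is, that $\Phi(f)$ really lies in $\Aut_H(R)$. Given $f \in \Aut_G(S)$, the composite $\alpha f \alpha^{-1}$ is an isomorphism $R \to R$ (using $\alpha(S)=R$); moreover $f$ is in particular a $G$--subconjugaci\'on $S \to S$, so because $\alpha$ preserves fusi\'on, $\alpha f \alpha^{-1}$ is an $H$--subconjugaci\'on $R \to R$. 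Being simultaneously an isomorphism and an $H$--subconjugaci\'on, it is an $H$--conjugaci\'on, hence an element of $\Aut_H(R)$.

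Next I would observe that $\Phi$ is a group homomorphism, which is immediate from associativity:
\[
\Phi(f_1 f_2) = \alpha f_1 f_2 \alpha^{-1} = (\alpha f_1 \alpha^{-1})(\alpha f_2 \alpha^{-1}) = \Phi(f_1)\,\Phi(f_2).
\]
For bijectivity I would define $\Psi \colon \Aut_H(R) \to \Aut_G(S)$ by $\Psi(g) = \alpha^{-1} g \alpha$. The same reasoning as above applies with $\alpha^{-1}$ in place of $\alpha$, since the condition in the definition of equivalencia $p$-local is symmetric in $\alpha$ and $\alpha^{-1}$ (equivalently, $c_{\alpha^{-1}}$ sends $H$--subconjugaciones to $G$--subconjugaciones), so $\Psi$ is well defined. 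Clearly $\Psi \circ \Phi = \id$ and $\Phi \circ \Psi = \id$, so $\Phi$ is an isomorphism of groups and $\Aut_G(S) \cong \Aut_H(R)$.

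The only delicate point, and the heart of the argument, is the well--definedness of $\Phi$: one must verify not just that $\alpha f \alpha^{-1}$ is a homomorphism, but that it is an $H$--conjugaci\'on. This is exactly where the fusion--preserving hypothesis on $\alpha$ is used, together with the observation that an element of $\Aut_G(S)$ is at once an isomorphism and a $G$--subconjugaci\'on. Everything else is routine.
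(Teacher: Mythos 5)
Your argument is correct and is essentially the same as the paper's: conjugation by a fusion--preserving isomorphism $\alpha$, with well--definedness coming from the fact that $\alpha$ sends $G$--conjugaciones to $H$--conjugaciones and with the inverse given by conjugation by $\alpha^{-1}$. Your extra care in invoking the remark (to get $\alpha$ between the specific Sylow subgroups $S$ and $R$ of the statement) and in noting that an $H$--subconjugaci\'on which is an isomorphism is an $H$--conjugaci\'on only makes explicit what the paper leaves implicit.
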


\begin{proof}
Sea $\alpha \colon S \to R$ un isomorfismo que preserva fusi\'on. Consideremos el homomorfismo
\begin{align*}
\Aut_G(S) & \to \Aut_H(S'), \\
f & \mapsto \alpha f \alpha^{-1}.
\end{align*}
Est\'a bien definido, pues si $f$ es una $G$--conjugaci\'on, entonces $\alpha f \alpha^{-1}$
es una $H$--conjugaci\'on. Es un isomorfismo, pues su inversa es conjugar por $\alpha^{-1}$.
\end{proof}

\begin{example}
El grupo $AGL_1(\F_5)$ de transformaciones afines del campo $\F_5$ tiene como elementos
a funciones $\F_5 \to \F_5$ de la forma $f_{a,b}(x)=ax + b$ con $a$, $b \in \F_5$ y $a \neq 0$.
Veamos que $D_{10} \not\simeq_5 AGL_1(\F_5)$. Un $5$-subgrupo de Sylow de $D_{10}$ es
\[ S = \{ 1,r,r^2,r^3,r^4 \}, \]
donde $r$ y $s$ son los generadores usuales de $D_{10}$ que satisfacen $r^5 =1$, $s^2=1$ y $srs=r^4$.
Por otra parte, un $5$-subgrupo de Sylow de $AGL_1(\F_5)$ est\'a dado por
\[ R = \{ f_{[1]_5,b} \mid b \in \F_5 \}. \] 
Como $S$ tiene \'indice dos en $D_{10}$, es normal y por lo tanto
\[ \Aut_{D_{10}}(S) = \{ 1_S, c_s \} \cong \Z/2. \]
Por otra parte, en $AGL_1(\F_5)$ se tiene
\[ f_{a,b}^{-1} = f_{a^{-1},-a^{-1}b} \]
y por lo tanto
\[ f_{a,b} f_{[1]_5,c} f_{a,b}^{-1} = f_{[1]_5,ac}. \]
Luego $R$ es normal en $AGL_1(\F_5)$ y se cumple 
\[ \Aut_{AGL_1(\F_5)}(R) = \{ f_{a,[0]_5} \mid a \in \F_5^{\times} \} \cong \Z/4. \]
En consecuencia $D_{10} \not\simeq_5 AGL_1(\F_5)$.
\end{example}

En el caso de grupos con $p$-subgrupos de Sylow abelianos, una versi\'on mejorada
de este invariante caracteriza las equivalencias $p$-locales. Para ello primero
necesitaremos un lema.

\begin{lemma}
\label{LemaExtension}
Sea $G$ un grupo finito con $p$-subgrupo de Sylow $S$ abeliano. Cualquier $G$--subconjugaci\'on
entre subgrupos de $S$ extiende a una $G$--conjugaci\'on de $S$.
\end{lemma}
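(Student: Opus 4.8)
The plan is to use the Sylow theory inside the normalizer of $S$, exploiting that $S$ is abelian. Let $f\colon P\to Q$ be a $G$--subconjugación between subgrupos de $S$, so $f=c_g$ for some $g\in G$, with $gPg^{-1}=f(P)\le Q\le S$. I want to produce $g'\in G$ with $g'Sg'^{-1}=S$, i.e. $g'\in N_G(S)$, such that $c_{g'}$ restricted to $P$ equals $f$; this is exactly a $G$--conjugación de $S$ extendiendo $f$ (note $c_{g'}\colon S\to S$ is an isomorphism since $g'$ normalizes $S$).

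First I would observe that it suffices to find such an extension not for $f$ itself but after adjusting the target: since $c_g(P)=gPg^{-1}$ is a subgroup of $S$, the element $g$ conjugates $P$ into $S$. The key step is then the following standard Sylow argument. Consider $C_G(P)$, or rather work with the $p$-subgroup $S^{g} := g^{-1}Sg$, which is a Sylow $p$-subgroup of $G$ containing $P$ (because $S$ is abelian, $S\le C_G(gPg^{-1})$, hence $g^{-1}Sg \le C_G(P)$, and in particular $P\le S^g$). Now both $S$ and $S^g$ are Sylow $p$-subgroups of $C_G(P)$: indeed $S$ centralizes $P$ since $S$ is abelian and $P\le S$, and likewise $S^g$ centralizes $P$; and each has order equal to the full Sylow $p$-order of $G$, so a fortiori they are Sylow in $C_G(P)$. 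By Sylow's theorem applied \emph{inside} $C_G(P)$, there is $c\in C_G(P)$ with $c S^g c^{-1} = S$. Then $g' := cg$ satisfies $g' P g'^{-1} = c(gPg^{-1})c^{-1}$ and, crucially, $g' S^g g'^{-1}$... wait, more directly: $g'$ conjugates $S^g = g^{-1}Sg$ to $S$, i.e. $g'(g^{-1}Sg)g'^{-1}=S$, which gives $cg g^{-1} S g g^{-1} c^{-1}$, hmm — cleaner to say $g' S' g'^{-1}$ where I set $S'=g^{-1}Sg$: then $g'\in N_G(S)$ iff $g'S'g'^{-1}=?$. Let me instead just record: $c\,g^{-1}S g\,c^{-1}=S$ means $(cg^{-1})S(cg^{-1})^{-1}=S$, so $h:=cg^{-1}\in N_G(S)$. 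Since $c$ centralizes $P$ and $g^{-1}$ sends $gPg^{-1}=f(P)$ back to $P$, one checks $c_{h}$ sends $f(P)$ to $P$, i.e. $c_h = f^{-1}$ on $f(P)$; equivalently $c_{h^{-1}}=c_{g c^{-1}}$ agrees with $f$ on $P$, because $c^{-1}$ centralizes $P$ and $c_g|_P=f$. As $h^{-1}\in N_G(S)$, the map $c_{h^{-1}}\colon S\to S$ is the desired $G$--conjugación extending $f$.

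The main obstacle — really the only subtle point — is the verification that $S$ and a suitable conjugate $g^{-1}Sg$ are both contained in (and Sylow in) $C_G(P)$: this is where abelianness of $S$ is used twice (to get $S\le C_G(P)$ and $g^{-1}Sg\le C_G(gPg^{-1})$, then transported). Once that is in place, the conclusion is a direct application of Sylow's conjugacy theorem in the subgroup $C_G(P)$ together with a bookkeeping check that the centralizing adjustment does not disturb the restriction to $P$. I would present the element chasing carefully but without belaboring it, since each identity is a one-line computation with $c_g$.
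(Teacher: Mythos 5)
Your proof is correct and is essentially the same argument as the paper's: both rest on the observation that, since $S$ is abelian, $S$ is a Sylow $p$-subgroup of the relevant centralizer, followed by Sylow conjugacy inside that centralizer and the remark that the correcting element centralizes the subgroup, so the restriction to $P$ is unchanged. The only (cosmetic) difference is that you work in $C_G(P)$ and adjust $g$ on the right by an element $c^{-1}\in C_G(P)$, whereas the paper works in $C_G(gPg^{-1})$ and adjusts on the left by $z\in C_G(gPg^{-1})$, yielding $c_{zg}$ instead of your $c_{gc^{-1}}$.
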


\begin{proof}
Sea $c_g \colon P \to Q$ una $G$--subconjugaci\'on entre subgrupos de $S$. Como $S$ es abeliano, est\'a contenido en $C_G(P)$ y en 
$C_G(gPg^{-1})$. Puesto que los \'ordenes de estos centralizadores dividen al orden de $G$, se debe tener que $S$ es un $p$-subgrupo 
de Sylow de ambos. Por otra parte se tiene $gC_G(P)g^{-1} \leq C_G(g Pg^{-1})$ y entonces $gSg^{-1}$ es un subgrupo de $C_G(gPg^{-1})$
del mismo orden que $S$, as\'i que tambi\'en es un $p$-subgrupo de Sylow de $C_G(gPg^{-1})$. Entonces debe existir $z \in C_G(gPg^{-1})$ 
tal que $zgSg^{-1}z^{-1}=S$. La conjugaci\'on $c_{zg} \colon S \to S$ restringida a $P$ coincide con $c_g$.
\end{proof}

\begin{proposition}
\label{SylowsAbelianos}
Sean $G$ y $H$ grupos finitos con $p$-subgrupos de Sylow $S$ y $R$ que son abelianos.
Para un isomorfismo $\alpha \colon S \to R$ las siguientes condiciones son equivalentes.
\begin{enumerate}
\item El isomorfismo $\alpha$ preserva fusi\'on.
\item Para cada $f \in \Aut(S)$, se tiene que $f$ es una $G$--conjugaci\'on
si y solo si $\alpha f \alpha^{-1}$ es una $H$--conjugaci\'on.
\item El homomorfismo $ c_{\alpha} \colon \Aut(S) \to \Aut(R)$ restringe
a un isomorfismo $ \Aut_G(S) \to \Aut_H(R)$.
\end{enumerate}
\end{proposition}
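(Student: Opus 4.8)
The plan is to prove $(1)\Rightarrow(2)$, then $(2)\Rightarrow(1)$ (the substantial implication, where the hypothesis that $S$ is abelian enters, through Lemma \ref{LemaExtension}), and finally $(2)\Leftrightarrow(3)$ as a purely formal reformulation. The implication $(1)\Rightarrow(2)$ is immediate: a $G$--conjugaci\'on is in particular a $G$--subconjugaci\'on, so condition $(2)$ is just condition $(1)$ restricted to the case $P=Q=S$ with $f\in\Aut(S)$.

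For $(2)\Rightarrow(1)$ take $P,Q\leq S$ and a homomorphism $f\colon P\to Q$. Suppose first that $f$ is a $G$--subconjugaci\'on, say $f=c_g$ with $g\in G$; then $c_g\colon P\to gPg^{-1}$ is a $G$--subconjugaci\'on between subgroups of $S$, since $gPg^{-1}=f(P)\subseteq Q\leq S$, so by Lemma \ref{LemaExtension} it extends to some $\widetilde{f}\in\Aut_G(S)$ with $\widetilde{f}|_P=f$. By $(2)$, $\alpha\widetilde{f}\alpha^{-1}\in\Aut_H(R)$, and since for $x\in\alpha(P)$ one has $\alpha\widetilde{f}\alpha^{-1}(x)=\alpha f(\alpha^{-1}(x))=\alpha f\alpha^{-1}(x)\in\alpha(Q)$, the map $\alpha f\alpha^{-1}\colon\alpha(P)\to\alpha(Q)$ is the restriction of the $H$--conjugaci\'on $\alpha\widetilde{f}\alpha^{-1}$, hence an $H$--subconjugaci\'on. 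Conversely, if $\alpha f\alpha^{-1}\colon\alpha(P)\to\alpha(Q)$ is an $H$--subconjugaci\'on, then $\alpha(P),\alpha(Q)\leq R$, so Lemma \ref{LemaExtension} applied to $H$ and $R$ extends it to some $\sigma\in\Aut_H(R)$ restricting to $\alpha f\alpha^{-1}$ on $\alpha(P)$. Now $\alpha^{-1}\sigma\alpha\in\Aut(S)$ satisfies $\alpha(\alpha^{-1}\sigma\alpha)\alpha^{-1}=\sigma\in\Aut_H(R)$, so by $(2)$ we get $\alpha^{-1}\sigma\alpha\in\Aut_G(S)$; its restriction to $P$ sends $x$ to $\alpha^{-1}\sigma(\alpha(x))=\alpha^{-1}\bigl(\alpha f\alpha^{-1}(\alpha(x))\bigr)=f(x)$, so $f$ is the restriction of a $G$--conjugaci\'on and therefore a $G$--subconjugaci\'on.

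Finally, $(2)\Leftrightarrow(3)$. The map $c_\alpha\colon\Aut(S)\to\Aut(R)$, $f\mapsto\alpha f\alpha^{-1}$, is an isomorphism with inverse $c_{\alpha^{-1}}$. Condition $(2)$ states precisely that, for $f\in\Aut(S)$, one has $f\in\Aut_G(S)$ if and only if $c_\alpha(f)\in\Aut_H(R)$; equivalently $c_\alpha^{-1}(\Aut_H(R))=\Aut_G(S)$, i.e. $c_\alpha(\Aut_G(S))=\Aut_H(R)$. Since $c_\alpha$ is an isomorphism of groups, this equality of subgroups holds if and only if $c_\alpha$ restricts to an isomorphism $\Aut_G(S)\to\Aut_H(R)$, which is condition $(3)$.

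The only genuine obstacle is the step $(2)\Rightarrow(1)$: extending a subconjugaci\'on defined on a proper subgroup of $S$ to all of $S$ fails for general Sylow subgroups, and it is exactly here that Lemma \ref{LemaExtension}, and hence the abelian hypothesis, is indispensable; everything else is bookkeeping about restrictions of homomorphisms.
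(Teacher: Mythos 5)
Your proposal is correct and follows essentially the same route as the paper: the substantive step in both is to use Lemma \ref{LemaExtension} to extend a subconjugaci\'on between subgroups of the abelian Sylow subgroup to an element of $\Aut_G(S)$ (resp.\ $\Aut_H(R)$), transport it with $c_\alpha$, and restrict back. The only difference is organizational --- the paper closes the cycle $(1)\Rightarrow(2)\Rightarrow(3)\Rightarrow(1)$, running the extension argument from $(3)$, while you run it from $(2)$ and note separately that $(2)\Leftrightarrow(3)$ is formal --- which changes nothing essential.
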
 

\begin{proof}
Es claro que la primera condici\'on implica la segunda, y que la segunda implica la tercera. Supongamos
que $c_{\alpha}$ restringe a un isomorfismo $ \Aut_G(S) \to \Aut_H(R)$. Sea $c_g \colon P \to Q$ una $G$--subconjugaci\'on
entre subgrupos de $S$. Por el Lema \ref{LemaExtension}, existe $z \in N_G(S)$ tal que la restricci\'on de $c_z \colon S \to S$ 
a $P$ coincide con $c_g$. Por hip\'otesis, se tiene $\alpha c_z \alpha^{-1} = c_h$ para alg\'un $h \in H$ y por lo tanto
$\alpha c_g \alpha^{-1} = c_h$. El mismo argumento muestra que $c_{\alpha^{-1}}$ env\'ia $H$--subconjugaciones a $G$--conjugaciones,
as\'i que $\alpha$ preserva fusi\'on.
\end{proof}

Introducimos ahora una nueva propiedad $p$-local que nos ayudar\'a a determinar una clase de grupos
finitos para los cuales ser isomorfos es lo mismo que ser $p$-localmente equivalentes para todo primo
$p$.

\begin{definition}
Se dice que un grupo $G$ es $p$-nilpotente si es $p$-localmente
equivalente a su $p$-subgrupo de Sylow.
\end{definition}

Por ejemplo, vimos que $\Sigma_3$ es $2$-nilpotente en el Ejemplo \ref{SigmaTres2Nilpotente}, y que no es $3$-nilpotente
en el Ejemplo \ref{IsomorphicSylows}.

Sea $G \simeq_p H$. Si $G$ es $p$-nilpotente, entonces $G$ es $p$-localmente
equivalente a su $p$-subgrupo de Sylow $S$ y por lo tanto $H \simeq_p S$. El
isomorfismo entre los $p$-subgrupos de Sylow de $G$ y $H$ nos dice entonces
que $H$ es $p$-nilpotente. As\'i que ser $p$-nilpotente es una propiedad $p$-local.

\begin{proposition}
Sea $G$ un grupo finito y $S$ un $p$-subgrupo de Sylow. Las siguientes condiciones son equivalentes.
\begin{enumerate}
\item Los elementos de orden primo a $p$ forman un subgrupo.
\item Existe un subgrupo normal $N$ de $G$ tal que la composici\'on $S \to G \to G/N$ es un isomorfismo.
\item El grupo $G$ es $p$-nilpotente.
\item Para cada subgrupo $P$ de $S$, el grupo $\Aut_G(P)$ es un $p$-grupo.
\end{enumerate}
\end{proposition}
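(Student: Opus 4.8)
El plan es establecer el ciclo de implicaciones $(1) \Rightarrow (2) \Rightarrow (3) \Rightarrow (4) \Rightarrow (1)$: las tres primeras son elementales o descansan en la maquinaria de equivalencias $p$-locales ya disponible, mientras que la \'ultima es el verdadero obst\'aculo, pues coincide con el teorema de $p$-complemento normal de Frobenius, que citar\'e.

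Para $(1) \Rightarrow (2)$ tomar\'e $N$ igual al conjunto de elementos de $G$ de orden primo a $p$. Por hip\'otesis es un subgrupo, y al estar definido por una propiedad preservada por todo automorfismo, es caracter\'istico, en particular normal; por el teorema de Cauchy su orden es primo a $p$, de modo que $S \cap N = 1$ y la composici\'on $S \to G \to G/N$ es inyectiva. Para la suprayectividad basta comprobar que $G/N$ es un $p$-grupo: todo $g \in G$ se descompone como $g = g_p g_{p'}$ en su $p$-parte y su $p'$-parte (potencias de $g$, luego conmutan), con $g_{p'} \in N$, as\'i que la imagen de $g$ en $G/N$ coincide con la de $g_p$, de orden una potencia de $p$; un conteo de \'ordenes concluye. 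Para $(2) \Rightarrow (1)$, si $N \trianglelefteq G$ hace de $S \to G/N$ un isomorfismo, entonces $|N| = |G|/|S|$ es primo a $p$, luego todo elemento de $N$ tiene orden primo a $p$; rec\'iprocamente, si $g \in G$ tiene orden primo a $p$, su imagen en $G/N \cong S$ es un elemento de orden primo a $p$ en un $p$-grupo, es decir, trivial, y $g \in N$. As\'i $N$ es exactamente el conjunto de tales elementos.

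Para $(2) \Rightarrow (3)$, de $S \cap N = 1$ y $|S| \cdot |N| = |G|$ deduzco $G = N \rtimes S$, y afirmo que $\alpha = \id_S$ preserva fusi\'on. Como el rec\'iproco es trivial, basta ver que toda $G$--subconjugaci\'on $c_g \colon P \to Q$ entre subgrupos de $S$ es una $S$--subconjugaci\'on. Escribiendo $g = ns$ con $n \in N$, $s \in S$, y poniendo $P' = sPs^{-1} \leq S$, se tiene $gPg^{-1} = nP'n^{-1} \leq S$; entonces, para cada $x \in P'$, el conmutador $nxn^{-1}x^{-1}$ est\'a en $S$ (producto de $nxn^{-1} \in nP'n^{-1} \leq S$ y de $x^{-1} \in S$) y en $N$ (pues $N \trianglelefteq G$), luego es trivial. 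Por lo tanto $n$ centraliza a $P'$, de donde $c_g = c_s$ como homomorfismos $P \to Q$ con $s \in S$. Para $(3) \Rightarrow (4)$, si $G \simeq_p S$ v\'ia $\alpha$, el argumento de la demostraci\'on de la proposici\'on anterior aplicado a un subgrupo $P \leq S$ (en vez de a $S$) muestra que $c_\alpha$ restringe a un isomorfismo $\Aut_G(P) \cong \Aut_S(\alpha(P)) = N_S(\alpha(P))/C_S(\alpha(P))$, que es un subcociente de $S$ y, por ende, un $p$-grupo.

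El paso $(4) \Rightarrow (1)$ es el punto delicado. Primero observar\'e que la condici\'on (4) equivale a que $\Aut_G(P) \cong N_G(P)/C_G(P)$ sea un $p$-grupo para \emph{todo} $p$-subgrupo $P$ de $G$, ya que cada uno de ellos es $G$--conjugado a un subgrupo de $S$ y los grupos $\Aut_G(-)$ son invariantes bajo conjugaci\'on; esto es precisamente la hip\'otesis del teorema de Frobenius, cuya conclusi\'on es la existencia de un $p$-complemento normal, es decir, la propiedad (2). No reproducir\'e su demostraci\'on, que requiere el homomorfismo de transferencia y queda fuera del alcance de esta exposici\'on; me limitar\'e a citarla.
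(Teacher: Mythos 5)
Tu demostraci\'on es correcta y sigue en esencia el mismo ciclo $(1)\Rightarrow(2)\Rightarrow(3)\Rightarrow(4)\Rightarrow(1)$ que el art\'iculo: el argumento del conmutador en $S \cap N = \{1\}$ para $(2)\Rightarrow(3)$, la transferencia de la preservaci\'on de fusi\'on a los grupos $\Aut_G(P)$ para $(3)\Rightarrow(4)$ y la cita del teorema del $p$-complemento normal de Frobenius para $(4)\Rightarrow(2)\Rightarrow(1)$ coinciden con lo que hace el texto (que tampoco demuestra dicho teorema). La \'unica diferencia genuina est\'a en la sobreyectividad de $S \to G/N$ en $(1)\Rightarrow(2)$: el art\'iculo la obtiene del teorema de conteo de Frobenius (la cardinalidad de $\{x \in G \mid x^d=1\}$ es m\'ultiplo de $d$, Teorema 9.1.2 de Hall), mientras que t\'u usas la descomposici\'on $g = g_p g_{p'}$ en $p$-parte y $p'$-parte para concluir que todo elemento de $G/N$ tiene orden potencia de $p$; conviene explicitar el ``conteo de \'ordenes'': por Cauchy $G/N$ es entonces un $p$-grupo, luego $|G/N| \leq |S|$, y la inyectividad da la igualdad. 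Tu ruta evita as\'i un teorema no trivial a cambio de un argumento elemental, lo cual es una peque\~na mejora; el resto (la observaci\'on de que basta la hip\'otesis (4) sobre subgrupos de $S$ para aplicar Frobenius, y tu versi\'on de $(3)\Rightarrow(4)$ v\'ia el isomorfismo $\Aut_G(P) \cong \Aut_S(\alpha(P))$ en lugar de comprobar que $1_S$ preserva fusi\'on) son reformulaciones equivalentes de los pasos del art\'iculo.
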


\begin{proof}
Supongamos que los elementos de orden primo a $p$ forman un subgrupo $N$. Este subgrupo debe
ser normal pues si un elemento tiene orden primo a $p$, tambi\'en se cumple esto para sus conjugados.
Veamos que la composici\'on $S \to G \to G/N$ es biyectiva. Si $s \in S$ es tal que $sN=N$, entonces
$s \in S \cap N$ con lo que su orden es a la vez primo a $p$ y una potencia de $p$. Luego $s=1$ y
la composici\'on es inyectiva. Para ver la sobreyectividad, es suficiente probar que $|S| \geq |G/N|$.
Sea $d=|G|/|S|$. El orden de los elementos de $N$ divide a $d$, y si el orden de un elemento de
$G$ divide a $d$, en particular es primo a $p$ y por lo tanto est\'a en $N$. Es decir, 
\[ N = \{ x \in G \mid x^d = 1 \}. \]
Por el Teorema de Frobenius (Teorema 9.1.2 en \cite{H}), la cardinalidad del conjunto $\{ x \in G \mid x^d = 1 \}$ es un m\'ultiplo de $d$, de
donde obtenemos
\[ \frac{|G|}{|S|} \leq |N| \Rightarrow |G/N| \leq |S|. \]
Luego $S \to G/N$ es un isomorfismo.

Supongamos ahora que existe un subgrupo normal $N$ de $G$ tal que $S \to G/N$ es un isomorfismo. Conjugaci\'on
por el homomorfismo $1_S \colon S \to S$ claramente env\'ia $S$--subconjugaciones a $G$--subconjugaciones. Su
inversa es $1_S$ y al conjugar $c_g \colon P \to Q$ volvemos a obtener $c_g$, que veremos que tambi\'en
es una $S$--subconjugaci\'on. Como $S \to G/N$ es un isomorfismo, existe $s \in S$ y $n \in N$ tal que $g=sn$.
Dado $p \in P$, el elemento $pn^{-1}p^{-1}$ est\'a en $N$, luego $npn^{-1}p^{-1}$ est\'a en $N$. Entonces
\[ snpn^{-1}p^{-1}s^{-1} \in N \cap S \]
y por ser $S \to G/N$ un isomorfismo, se tiene $snpn^{-1}p^{-1}s^{-1} =1$, de donde $snpn^{-1}s^{-1} = sps^{-1}$.
Esto prueba que $c_g=c_s$, con lo cual $1_S$ es una equivalencia $p$-local de $S$ a $G$.     

Si $G$ es $p$-nilpotente, existe una equivalencia $p$-local $\alpha \colon S \to S$ de $S$ a $G$. Puesto que $\alpha^{-1} \colon S \to S$ 
es una equivalencia $p$-local de $S$ a $S$, la composici\'on con $\alpha$, que es $1_S$, es una equivalencia $p$-local de $S$ a $G$. Sea 
$P \leq S$ y $c_g \in \Aut_G(P)$. Como $1_S$ es una equivalencia $p$-local, se debe tener $c_g = c_s$ para alg\'un $s \in S$ y por lo tanto 
el orden de $c_g$ es una potencia de $p$. En consecuencia, $\Aut_G(P)$ es un $p$-grupo.

Si $\Aut_G(P)$ es un $p$-grupo para todo $P \leq S$, por el Teorema del $p$-complemento normal de Frobenius (Teorema 5.26 de \cite{I}), existe un subgrupo
normal $N$ cuyo \'indice es $|S|$. El orden de $N$ es primo a $p$, as\'i que sus elementos tienen orden primo a $p$. Si $x$ es un elemento de orden $q$ primo
a $p$, entonces $(xN)^q = N$. Pero tambi\'en se tiene $(xN)^{|S|} = N$, as\'i que el orden de $xN$ es primo a $p$ y una potencia de $p$ al mismo tiempo, por lo que
$xN=N$, es decir, $x \in N$. Esto muestra que el conjunto de elementos de orden primo a $p$ es $N$, que es un subgrupo.
\end{proof}

\begin{example}
$A_4$ no es $2$-nilpotente, pues los elementos de orden primo a $2$ son
\[ 1, (1,2,3), (1,2,4), (2,3,4), (1,3,4), (1,3,2), (1,4,2), (2,4,3), (1,4,3), \]
pero $A_4$ no puede tener un subgrupo de orden $9$. Los elementos de orden primo a $3$ son
\[ 1, (1,2)(3,4), (1,3)(2,4), (1,4)(2,3), \]
que forman un subgrupo, as\'i que $A_4$ es $3$-nilpotente. Finalmente $A_4$ es $p$-nilpotente si $p \geq 5$ pues
estos primos no dividen el orden de $A_4$.
\end{example}

\begin{example}
El grupo $\Z/4$ act\'ua sobre $\Z/3$ por automorfismos mediante
\[ [n]_4 \cdot [m]_3 = \left\{ \begin{array}{ll}
                               {}[m]_3, & \text{ si $n$ es par, } \\
                               {}[2m]_3, & \text{ si $n$ es impar. } \end{array} \right. \]
Al correspondiente producto semidirecto se le conoce como el grupo dic\'iclico de orden 
$12$ y se denota $\Dic_{12}$. Veamos que $\Dic_{12}$ es $2$-nilpotente.

El orden del elemento $([m]_3,[n]_4)$ tiene que ser un m\'ultiplo del orden de $[n]_4$, que
es $1$, $2$ \'o $4$. Para que este orden sea impar, tiene que ser $1$, es decir, $[n]_4 = [0]_4$. 
Por lo tanto, los \'unicos elementos de orden impar son
\[ ([0]_3,[0]_4), ([1]_3,[0]_4), ([2]_3,[0]_4), \]
que forman un subgrupo. Luego $\Dic_{12}$ es $2$-nilpotente. Su $2$-subgrupo de Sylow es
\[ \{ ([0]_3,x) \mid x \in \Z/4 \} \cong \Z/4, \]
as\'i que $\Dic_{12} \simeq_2 \Z/4$. 
\end{example}

Si $\varphi \colon G \to H$ es un isomorfismo de grupos finitos y $S$ es un $p$-subgrupo de Sylow de $G$,
se tiene que $\varphi(S)$ es un $p$-subgrupo de Sylow de $H$. Adem\'as, la restricci\'on $\varphi \colon S \to \varphi(S)$
es un isomorfismo que preserva fusi\'on ya que si $c_g \colon P \to Q$ es una $G$--subconjugaci\'on entre subgrupos de $S$ se
tiene
\[ \varphi c_g \varphi^{-1} = c_{\varphi(g)}. \]
Igualmente $c_{\varphi^{-1}}$ env\'ia $H$--subconjugaciones a $G$--subconjugaciones. As\'i 
que si dos grupos finitos son isomorfos, son $p$-localmente equivalentes para todo primo $p$. Sin embargo, el converso no es
necesariamente cierto como muestra el siguiente ejemplo.

\begin{example}
\label{ContraejemploEquivalencias}
En este ejemplo veremos que $\Sigma_3 \times \Z/4$ y $\Dic_{12} \times \Z/2$
no son isomorfos, pero son $p$-localmente equivalentes para todo primo $p$. 

Para ver que no son isomorfos, usaremos que si $G \cong H$, entonces $Z(G) \cong Z(H)$. El centro de $\Sigma_3$ es $\{ 1 \}$, as\'i que
\[ Z(\Sigma_3 \times \Z/4) = \{ 1 \} \times \Z/4 \cong \Z/4. \] 
Si $([m]_3,[n]_4) \in Z(\Dic_{12})$, entonces
\[ ([2m]_3,[n+1]_4)= ([0]_3,[1]_4)([m]_3,[n]_4) = ([m]_3,[n]_4)([0]_3,[1]_4) = ([m]_3,[n+1]_4), \]
de donde $m=0$. Luego es de la forma $([0]_3,[n]_4)$. Pero tambi\'en se debe cumplir
\[ ([1]_3,[n]_4)=([1]_3,[0]_4)([0]_3,[n]_4) = ([0]_3,[n]_4)([1]_3,[0]_4) = ([2^n]_3, [n]_4), \]
luego $n=0,2$. Comprobamos que  
\[ ([0]_3,[2]_4)([m]_3,[n]_4) = ([m]_3,[n+2]_4) = ([m]_3,[n]_4)([0]_3,[2]_4), \]
por lo tanto $Z(\Dic_{12}) = \{ ([0]_3,[0]_4), ([0]_3,[2]_4) \} \cong \Z/2$ y entonces
\[ Z(\Dic_{12} \times \Z/2) \cong \Z/2 \times \Z/2. \]
Puesto que sus centros no son isomorfos, los grupos $ \Dic_{12} \times \Z/2$ y $\Sigma_3 \times \Z/4$
no son isomorfos.

Veamos que $ \Sigma_3 \times \Z/4 \simeq_p \Dic_{12} \times \Z/2 $ para todo primo $p$. Puesto que $2$ y $3$
son los \'unicos primos que dividen los \'ordenes de estos grupos, para $p \geq 5$ se tiene
\[ \Sigma_3 \times \Z/4 \simeq_p \{ 1 \} \simeq_p \Dic_{12} \times \Z/2. \]
Ya probamos que $\Sigma_3$ y $\Dic_{12}$ son $2$-nilpotentes. Sus $2$-subgrupos de Sylow son isomorfos a $\Z/2$
y $\Z/4$, respectivamente as\'i que
\[ \Sigma_3 \times \Z/4 \simeq_2 \Z/2 \times \Z/4 \simeq_2 \Z/2 \times \Dic_{12}. \]
Por otra parte, consideremos sus $3$-subgrupos de Sylow
\begin{align*}
S & = \{ ([0]_3,[0]_4), ([1]_3,[0]_4), ([2]_3,[0]_4) \}, \\
S' & = \{ 1, (1,2,3), (1,3,2) \}, 
\end{align*}
que son abelianos. Se puede comprobar que $\Aut_{\Dic_{12}}(S) = \Aut(S)$ y $\Aut_{\Sigma_3}(S') = \Aut(S')$, 
as\'i que cualquier isomorfismo $S \to S'$ preserva fusi\'on. Por lo tanto $ \Dic_{12} \simeq_3 \Sigma_3$
y
\[ \Sigma_3 \times \Z/4 \simeq_3 \Sigma_3 \simeq_3 \Dic_{12} \simeq_3 \Dic_{12} \times \Z/2. \] 
\end{example}

Recordemos que la serie central descendente de $G$ se define de manera 
inductiva mediante $\gamma_1(G) = G$ y $\gamma_{n+1}(G) = [\gamma_n(G),G]$.
Forman una serie
\[ \cdots \vartriangleleft \gamma_{n+1}(G) \vartriangleleft \gamma_n(G) \vartriangleleft \cdots \vartriangleleft \gamma_2(G) \vartriangleleft \gamma_1(G) = G . \]
Se dice que un grupo es nilpotente si su serie central descendente termina en el
grupo trivial en un n\'umero finito de pasos. Es decir, si $\gamma_n(G)=\{1\}$ para alg\'un $n$.

\begin{proposition}
Sea $G$ un grupo finito y para cada primo $p$, sea $S_p$ un $p$-subgrupo
de Sylow de $G$. Las siguientes condiciones son equivalentes.
\begin{itemize}
\item $G$ es nilpotente.
\item $G$ es isomorfo a $\prod_p S_p$.
\item $G$ es $p$-nilpotente para todo primo $p$.
\end{itemize}
\end{proposition}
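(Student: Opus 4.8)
El plan es demostrar las tres condiciones de manera c\'iclica: que $G$ nilpotente implica $G \cong \prod_p S_p$; que $G \cong \prod_p S_p$ implica que $G$ es $p$-nilpotente para todo primo $p$; y que $G$ $p$-nilpotente para todo primo $p$ implica que $G$ es nilpotente. La segunda implicaci\'on es casi inmediata a partir de la caracterizaci\'on de $p$-nilpotencia demostrada antes, y la primera es donde reside el trabajo sustancial.

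Para la primera implicaci\'on, la herramienta clave es la propiedad del normalizador en grupos nilpotentes: si $G$ es nilpotente y $H$ es un subgrupo propio, entonces $H$ est\'a contenido estrictamente en $N_G(H)$; esto se prueba por inducci\'on usando que $\gamma_n(G)=\{1\}$ para alg\'un $n$. Aplicando esto a $H=N_G(S_p)$ y usando que $N_G(N_G(S_p))=N_G(S_p)$ (si $g$ normaliza $N_G(S_p)$, entonces $S_p$ y $gS_pg^{-1}$ son ambos $p$-subgrupos de Sylow de $N_G(S_p)$, luego conjugados en $N_G(S_p)$, de donde $g \in N_G(S_p)$), se concluye que $N_G(S_p)=G$, es decir, cada $S_p$ es normal en $G$. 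Como los $S_p$ son normales y tienen \'ordenes primos relativos dos a dos, $S_{p_i}$ interseca trivialmente al producto de los anteriores por un argumento de \'ordenes, as\'i que por inducci\'on en el n\'umero de primos que dividen a $|G|$ el producto $S_{p_1}\cdots S_{p_k}$ es un producto directo interno; su orden es $|G|$, luego $G \cong \prod_p S_p$.

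Si $G \cong \prod_p S_p$, fijado un primo $p$ el subgrupo $N=\prod_{q\neq p} S_q$ es normal y la composici\'on $S_p \to G \to G/N$ es un isomorfismo, as\'i que $G$ es $p$-nilpotente por la caracterizaci\'on de $p$-nilpotencia probada anteriormente. Finalmente, supongamos que $G$ es $p$-nilpotente para todo primo $p$. Por esa misma caracterizaci\'on, para cada primo $p$ el conjunto $N_p$ de elementos de orden primo a $p$ es un subgrupo normal de $G$ con $G/N_p \cong S_p$, de modo que $|N_p|=|G|/|S_p|$. El homomorfismo $G \to \prod_p G/N_p$ tiene n\'ucleo $\bigcap_p N_p$; como para cada primo $q$ que divide a $|G|$ se tiene que $q$ no divide a $|N_q|$, este n\'ucleo tiene orden primo a todo divisor de $|G|$ y por lo tanto es trivial. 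Entonces $G$ se inyecta en $\prod_p G/N_p$, que tiene orden $\prod_p |S_p|=|G|$, as\'i que la inyecci\'on es un isomorfismo. Como cada $G/N_p \cong S_p$ es un $p$-grupo finito, es nilpotente, y un producto directo finito de grupos nilpotentes es nilpotente pues $\gamma_n$ del producto es el producto de los $\gamma_n$; tomando $n$ suficientemente grande se obtiene que $G$ es nilpotente.

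El principal obst\'aculo es la primera implicaci\'on, concretamente establecer que cada subgrupo de Sylow de un grupo nilpotente es normal; una vez disponible ese hecho, lo dem\'as se reduce a combinatoria de \'ordenes y al uso de resultados ya probados.
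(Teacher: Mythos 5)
Tu propuesta es correcta y sigue esencialmente el mismo camino que el art\'iculo: la normalidad de los subgrupos de Sylow en un grupo nilpotente mediante la propiedad del normalizador y el argumento tipo Frattini, la descomposici\'on como producto directo interno, y el uso de la caracterizaci\'on previa de la $p$-nilpotencia. Las \'unicas diferencias son menores: organizas las implicaciones de forma c\'iclica y, para pasar de la $p$-nilpotencia para todo $p$ al producto, usas la inyectividad de $G \to \prod_p G/N_p$ (n\'ucleo $\bigcap_p N_p$ trivial) con un conteo de \'ordenes, mientras que el art\'iculo construye una retracci\'on sobreyectiva $G \to \prod_p S_p$; ambas variantes son igualmente v\'alidas.
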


\begin{proof}
Supongamos que $G$ es isomorfo a $\prod_p S_p$. Puesto que los $p$-grupos finitos son nilpotentes (Teorema 5.33 de \cite{R}), obtenemos que $G$ es nilpotente.
Tambi\'en tendr\'iamos 
\[ G \cong \prod_p S_p \simeq_q S_q \]
para cada primo $q$, luego $G$ es $p$-nilpotente para todo primo $p$.

Supongamos que $G$ es nilpotente. Sea $p$ un primo que divide al orden de $G$. Si $N_G(S_p) \neq G$, entonces por el Teorema 5.38 de \cite{R} se tendr\'ia $N_G(S_p) < N_G(N_G(S_p))$. Notemos que $S_p$
es el \'unico $p$-subgrupo de Sylow de $N_G(S_p)$, pues es normal en $N_G(S_p)$. Por lo tanto, tambi\'en es el \'unico $p$-subgrupo de Sylow de $G$
que est\'a contenido en $N_G(S_p)$. Dado $g \in N_G(N_G(S_p))$, cumple $gN_G(S_p)g^{-1} \leq N_G(S_p)$ y en particular $gS_pg^{-1} \leq N_G(S_p)$.
Pero entonces $gS_pg^{-1}=S_p$, de donde $g \in N_G(S_p)$, lo cual contradice $N_G(S_p) < N_G(N_G(S_p))$. Por lo tanto $N_G(S_p)=G$, es decir, 
$S_p$ es normal en $G$. Si $p$ y $q$ son primos distintos que dividen a $G$ y $s \in S_p$, $t \in S_q$, se tiene que $sts^{-1} \in S_q$, luego
$sts^{-1}t^{-1} \in S_q$. Igualmente se tiene que $ts^{-1}t^{-1} \in S_p$, luego $sts^{-1}t^{-1} \in S_p$. Por lo tanto, el orden de este elemento
es una potencia de $p$ y a la vez primo a $p$, luego $s$ conmuta con $t$. Ahora consideremos la funci\'on
\begin{align*}
\prod_p S_p & \to G, \\
(s_p)_p & \mapsto \prod_p s_p.
\end{align*}
Notemos que es un producto finito pues si $p$ no divide al orden de $G$, se tiene $S_p = \{ 1 \}$. Es un homomorfismo porque los elementos $s_p$ conmutan entre s\'i. Si se tiene
\[ \prod_p s_p = \prod_p s'_p, \]
entonces para cada primo $q$ que divida al orden de $G$ se cumple
\[ s_q^{-1}s'_q = \prod_{p \neq q} s_p'^{-1} s_p. \]
Este es un elemento cuyo orden es una potencia de $q$ y a la vez primo a $q$, luego $s_q = s'_q$. As\'i que este
homomorfismo es inyectivo, y como ambos grupos tienen el mismo orden, es un isomorfismo. 

Supongamos ahora que $G$ es $p$-nilpotente para todo primo $p$. Entonces para cada primo $p$ existe un subgrupo normal $N_p$
tal que la composici\'on de la inclusi\'on $i_p \colon S_p \to G$ con el cociente $\pi_p \colon G \to G/N_p$ es un isomorfismo. Sea
$\alpha_p = \pi_p i_p$ y sea $r_p = \alpha_p^{-1} \pi_p \colon G \to S_p$. Satisface $r_p i_p = 1_{S_p}$, as\'i que es una retracci\'on.
Consideremos la funci\'on
\begin{align*}
r \colon G & \to \prod_p S_p, \\
 g & \mapsto (r_p(g))_p.
\end{align*}
Es un homomorfismo pues cada $r_p$ lo es. Adem\'as, si $s \in S_q$, entonces $r(s)$ es la tupla que tiene $s$ en la coordenada $q$ y $1$ en el
resto de coordenadas. Por lo tanto, $r$ es sobreyectiva y como ambos grupos tienen el mismo orden, es un isomorfismo.
\end{proof}

\begin{proposition}
Dos grupos finitos nilpotentes son isomorfos
si y solo si son $p$-localmente equivalentes para todo primo $p$.
\end{proposition}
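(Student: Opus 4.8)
El plan es reducir el enunciado a la proposici\'on anterior, que caracteriza los grupos finitos nilpotentes como aquellos isomorfos a $\prod_p S_p$. La implicaci\'on ``isomorfos $\Rightarrow$ $p$-localmente equivalentes para todo primo $p$'' ya qued\'o establecida en los p\'arrafos previos al Ejemplo \ref{ContraejemploEquivalencias} (y no usa nilpotencia), as\'i que lo \'unico que quedar\'ia por demostrar es el converso.

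Para el converso, primero supondr\'ia que $G$ y $H$ son nilpotentes finitos con $G \simeq_p H$ para todo primo $p$, y fijar\'ia $p$-subgrupos de Sylow $S_p$ de $G$ y $R_p$ de $H$ para cada $p$. Aplicando la proposici\'on anterior a cada uno de los dos grupos, obtendr\'ia isomorfismos $G \cong \prod_p S_p$ y $H \cong \prod_p R_p$, productos que en realidad son finitos pues $S_p=R_p=\{1\}$ para los primos que no dividen a $|G|$ o $|H|$ (y como $G\simeq_p H$ para todo $p$, los primos que dividen ambos \'ordenes coinciden; de hecho $|G|=|H|$).

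A continuaci\'on usar\'ia que la relaci\'on $G \simeq_p H$ proporciona, ya por su definici\'on, un isomorfismo de grupos entre $p$-subgrupos de Sylow de $G$ y $H$ (equivalentemente, invocar\'ia que ``el $p$-subgrupo de Sylow del grupo'' es un invariante $p$-local, como se list\'o tras la Proposici\'on \ref{ClasesConjugacion}). Esto da $S_p \cong R_p$ para todo primo $p$. Tomando el producto de estos isomorfismos se concluye

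\[ G \cong \prod_p S_p \cong \prod_p R_p \cong H. \]

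El punto que m\'as conviene enfatizar, m\'as que un obst\'aculo, es que aqu\'i la parte ``que preserva fusi\'on'' de una equivalencia $p$-local es irrelevante: basta la existencia del isomorfismo subyacente entre los $p$-subgrupos de Sylow, combinada con la descomposici\'on de un grupo nilpotente como producto de sus subgrupos de Sylow. No anticipo ninguna dificultad t\'ecnica seria; todo el trabajo pesado est\'a en la proposici\'on anterior y en los hechos ya recogidos en el texto.
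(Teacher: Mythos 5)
Tu propuesta es correcta y sigue esencialmente el mismo camino que el art\'iculo: la implicaci\'on directa ya est\'a establecida en el texto, y para el converso ambos argumentos descomponen $G \cong \prod_p S_p$ y $H \cong \prod_p R_p$ mediante la proposici\'on anterior y concluyen a partir de $S_p \cong R_p$. La \'unica diferencia, menor, es que el art\'iculo obtiene $S_p \cong R_p$ v\'ia la cadena $S_p \simeq_p G \simeq_p H \simeq_p R_p$ (usando la $p$-nilpotencia de los grupos nilpotentes), mientras que t\'u lo extraes directamente del isomorfismo subyacente entre subgrupos de Sylow que forma parte de la definici\'on de equivalencia $p$-local; ambas rutas son v\'alidas.
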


\begin{proof}
Supongamos que $G \simeq_p H$ para todo primo $p$ y sean $S_p$ y $R_p$ los $p$-subgrupos de Sylow de $G$ y $H$, respectivamente. 
Como ambos son $p$-nilpotentes para todo $p$, se tiene
\[ S_p \simeq_p G \simeq_p H \simeq_p R_p, \]
de donde $S_p \cong R_p$. Ahora como $G$ y $H$ son nilpotentes, se tiene
\[ G \cong \prod_p S_p \cong \prod_p R_p \cong H, \]
como quer\'iamos probar.
\end{proof}

En el Ejemplo \ref{ContraejemploEquivalencias}, vimos que $\Dic_{12} \times \Z/2 \simeq_3 \Sigma_3 \times \Z/4$
y $\Sigma_3 \times \Z/4 \simeq_3 \Sigma_3$, que no es $3$-nilpotente por el Ejemplo \ref{IsomorphicSylows}, as\'i que $\Dic_{12} \times \Z/2$
y $\Sigma_3 \times \Z/4$ no son $3$-nilpotentes. Por lo tanto, no son nilpotentes y esto permite la situaci\'on del
Ejemplo \ref{ContraejemploEquivalencias}.

\section{Sistemas de fusi\'on y cohomolog\'ia de grupos}
\label{section:fusion}

Sea $S$ un $p$-subgrupo de Sylow del grupo finito $G$. Consideremos
la categor\'ia $\Ff_S(G)$ cuyos objetos son los subgrupos de $S$, 
con morfismos
\[ \Mor_{\Ff_S(G)}(P,Q) = \Hom_G(P,Q). \]
Esta categor\'ia se llama el sistema de fusi\'on de
$G$ relativo a $S$ e intuitivamente es un contenedor que
almacena la informaci\'on $p$-local del grupo $G$. Para
justificar esta \'ultima afirmaci\'on, notemos que si $R$ es
un $p$-subgrupo de Sylow de $H$ y $\alpha \colon S \to R$
es un isomorfismo que preserva fusi\'on, induce un isomorfismo
de categor\'ias
\begin{align*}
\alpha_* \colon \Ff_S(G) & \to \Ff_R(H), \\
 P & \mapsto \alpha(P), \\
 f & \mapsto \alpha f \alpha^{-1}.
\end{align*}
Por otra parte, si $\alpha \colon S \to R$ es un isomorfismo tal
que $\alpha_*$ tiene sentido, se debe cumplir que $\alpha c_g \alpha^{-1}$
es una $H$--subconjugaci\'on para cada $G$--subconjugaci\'on $c_g$. Si
adem\'as $\alpha_*$ es un isomorfismo de categor\'ias, dada una $H$--subconjugaci\'on
$c_h$, debe existir una $G$--subconjugaci\'on $c_g$ tal que $\alpha c_g \alpha^{-1} = c_h$.
As\'i que $\alpha^{-1}c_h \alpha = c_g$, es decir, $\alpha$
es un isomorfismo que preserva fusi\'on. A un isomorfismo de categor\'ias de la forma
$\alpha_*$ le llamaremos un isomorfismo de sistemas de fusi\'on. Podemos resumir lo que 
hemos probado en el siguiente corolario.

\begin{corollary}
Sean $S$ y $R$ $p$-subgrupos de Sylow de $G$ y $H$. Entonces $G \simeq_p H$ si y solo si 
existe un isomorfismo $\Ff_S(G) \to \Ff_R(H)$ de sistemas de fusi\'on.
\end{corollary}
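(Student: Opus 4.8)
The plan is to observe that this corollary merely repackages the two paragraphs preceding it, so the proof is a matter of assembling those observations in both directions.

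For the implication $G \simeq_p H \Rightarrow$ existence of an isomorphism of fusion systems, I would start from an isomorphism $\alpha \colon S \to R$ that preserves fusion; the Remark preceding Example~\ref{SigmaTres2Nilpotente} guarantees we may take $S$ and $R$ to be the prescribed Sylow subgroups. Then I would check that the assignment $P \mapsto \alpha(P)$, $f \mapsto \alpha f \alpha^{-1}$ defines a functor $\alpha_* \colon \Ff_S(G) \to \Ff_R(H)$: the only thing to verify is that $\alpha f \alpha^{-1}$ again lies in $\Hom_H(\alpha(P),\alpha(Q))$ whenever $f \in \Hom_G(P,Q)$, which is exactly the condition that $\alpha$ preserve fusion, while compatibility with composition and identities is immediate. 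Finally $(\alpha^{-1})_*$ is a functor by the same reasoning applied to $\alpha^{-1}$, which also preserves fusion, and $(\alpha^{-1})_* \circ \alpha_* = \id$ and $\alpha_* \circ (\alpha^{-1})_* = \id$ on the nose, so $\alpha_*$ is an isomorphism of categories, hence an isomorphism of fusion systems.

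For the converse I would unwind the definition of an isomorphism of fusion systems: by definition such a functor has the form $\alpha_*$ for some group isomorphism $\alpha \colon S \to R$. That $\alpha_*$ is well defined on morphisms already forces $\alpha c_g \alpha^{-1}$ to be an $H$--subconjugaci\'on for every $G$--subconjugaci\'on $c_g$ between subgroups of $S$. That $\alpha_*$ is moreover an isomorphism of categories provides an inverse functor; tracking its effect on objects shows this inverse sends $\alpha(P)$ to $P$, so given an $H$--subconjugaci\'on $c_h \colon \alpha(P) \to \alpha(Q)$ there is a morphism $c_g \in \Hom_G(P,Q)$ with $\alpha c_g \alpha^{-1} = c_h$, i.e. $\alpha^{-1} c_h \alpha = c_g$ is a $G$--subconjugaci\'on. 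Hence $c_{\alpha^{-1}}$ carries $H$--subconjugaciones to $G$--subconjugaciones, and together with the previous sentence this says precisely that $\alpha$ preserves fusion, so $G \simeq_p H$.

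I do not expect a genuine obstacle here: the content was already established in the running text, and the one point that could conceivably require care — that an arbitrary categorical isomorphism $\Ff_S(G) \to \Ff_R(H)$ be induced by a group isomorphism of the underlying Sylow subgroups — has been sidestepped by the author's choice to \emph{define} an isomorphism of fusion systems to be one of the form $\alpha_*$ from the outset. The only mild subtlety worth stating explicitly is the compatibility of the construction with the change of Sylow subgroup, which is exactly what the cited Remark supplies.
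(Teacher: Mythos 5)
Tu propuesta es correcta y sigue esencialmente el mismo camino que el texto del art\'iculo: la direcci\'on directa es la construcci\'on de $\alpha_*$ a partir de un isomorfismo que preserva fusi\'on (usando la observaci\'on sobre el cambio de $p$-subgrupos de Sylow), y el rec\'iproco desenreda la definici\'on de isomorfismo de sistemas de fusi\'on como un funtor de la forma $\alpha_*$ para concluir que $\alpha$ preserva fusi\'on. No hay nada que objetar.
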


Podemos representar el sistema de fusi\'on mediante un diagrama donde colocamos los subgrupos
de $S$ con orden $p^n$ en la fila $(n+1)$-\'esima contando debe abajo. Usaremos segmentos verticales
u oblicuos para inclusiones de subgrupos de una fila en subgrupos de la fila de arriba. Usamos 
segmentos horizontales para indicar que dos subgrupos son $G$--conjugados, pero solo los necesarios
para que la relaci\'on de $G$--conjugaci\'on quede determinada por transitividad. Adem\'as se indican los $G$--automorfismos 
de un subgrupo en cada clase de $G$--conjugaci\'on.

Es f\'acil comprobar que dos sistemas de fusi\'on isomorfos deben compartir el mismo diagrama, salvo reordenaci\'on y elecci\'on del
subgrupo de cada clase donde indicamos los $G$--automorfismos. Por lo tanto este diagrama o cualquier pieza de informaci\'on que extraigamos
de \'el es un invariante $p$-local. Por ejemplo, si $S$ es un $2$-subgrupo de Sylow de $\Sigma_4$, el diagrama 
de $\Ff_S(\Sigma_4)$ es:
\[
\begin{tikzcd}
& & S \ar[loop, in=0,out=90,looseness=8,"\Z/2 \oplus \Z/2",""'] & & \\
& P_1 \ar[ur,dash] \ar[loop, in=180,out=90,looseness=8,"","\Z/2"'] & P_2 \ar[u,dash] \ar[loop, in=180,out=270,looseness=7,"","\Z/2"'] & P_3 \ar[loop, in=0,out=90,looseness=8,"\Sigma_3", ""'] \ar[ul,dash] & \\
Q_1 \ar[loop, in=180,out=90,looseness=8,"","\{1\}"'] \ar[r,dash] \ar[ur,dash] & Q_2 \ar[u,dash] & Q_3 \ar[r,dash] \ar[u,dash] \ar[ur,dash] \ar[ul,dash] & Q_4 \ar[u,dash] \ar[r,dash] & Q_5 \ar[loop, in=0,out=90,looseness=8,"\{1\}",""'] \ar[ul,dash] \\
 & & \{ 1 \} \ar[loop, in=0,out=270,looseness=8,"","\{1\}"'] \ar[ull,dash] \ar[ul,dash] \ar[u,dash] \ar[ur,dash] \ar[urr,dash] & & 
\end{tikzcd}
\]
Mientras que el diagrama de $\Ff_S(S)$ ser\'ia:
\[
\begin{tikzcd}
& & S \ar[loop, in=0,out=90,looseness=8,"\Z/2 \oplus \Z/2",""'] & & \\
& P_1 \ar[ur,dash] \ar[loop, in=180,out=90,looseness=8,"","\Z/2"'] & P_2 \ar[u,dash] \ar[loop, in=180,out=270,looseness=7,"","\Z/2"'] & P_3 \ar[loop, in=0,out=90,looseness=8,"\Z/2", ""'] \ar[ul,dash] & \\
Q_1 \ar[loop, in=180,out=90,looseness=8,"","\{1\}"'] \ar[r,dash] \ar[ur,dash] & Q_2 \ar[u,dash] & Q_3 \ar[loop, in=0,out=270,looseness=7,"\{1\}",""'] \ar[u,dash] \ar[ur,dash] \ar[ul,dash] & Q_4 \ar[u,dash] \ar[r,dash] & Q_5 \ar[loop, in=0,out=90,looseness=8,"\{1\}",""'] \ar[ul,dash] \\
 & & \{ 1 \} \ar[loop, in=0,out=270,looseness=8,"","\{1\}"'] \ar[ull,dash] \ar[ul,dash] \ar[u,dash] \ar[ur,dash] \ar[urr,dash] & & 
\end{tikzcd}
\]
La informaci\'on contenida en estos diagramas se puede obtener usando el software GAP \cite{GAP4}. Podemos dar varias razones por las que $S \not\simeq_2 \Sigma_4$.
\begin{enumerate}
\item Tienen un n\'umero distinto de clases de conjugaci\'on de subgrupos.
\item Tienen un n\'umero distinto de clases de conjugaci\'on de subgrupos de orden dos.
\item Hay una clase de $\Sigma_4$--conjugaci\'on con tres subgrupos, pero no hay tal
clase de $S$--conjugaci\'on.
\item Hay una clase de $S$--conjugaci\'on con un subgrupo, pero no existe tal clase 
de $\Sigma_4$--conjugaci\'on.
\item Existe un subgrupo cuyo grupo de automorfismos en $\Ff_S(\Sigma_4)$ es $\Sigma_3$,
pero no existe tal subgrupo para $\Ff_S(S)$.
\end{enumerate}

\begin{remark}
En realidad, el diagrama contiene toda la informaci\'on del sistema de fusi\'on. Para empezar, si dos subgrupos son $G$--conjugados, tienen grupos
de $G$--automorfismos isomorfos mediante el isomorfismo
\begin{align*}
\Aut_G(H) & \to \Aut_G(gHg^{-1}), \\
 c_z & \mapsto c_{gzg^{-1}}.
\end{align*}
Es por ello que solo indicamos el grupo de $G$--automorfismos de un subgrupo en cada clase de $G$--conjugaci\'on. Por otra parte, cualquier 
$G$--subconjugaci\'on $c_g \colon P \to Q$ se puede descomponer como una $G$--conjugaci\'on $ c_g \colon P \to gPg^{-1}$ seguida de una inclusi\'on 
$gPg^{-1} \to Q$.  Por esta raz\'on solamente conectamos las filas entre ellas mediante inclusiones. Por \'ultimo, si existe una $G$--conjugaci\'on
entre $P$ y $Q$, todas las dem\'as $G$--conjugaciones de $P$ a $Q$ son composiciones de esta $G$--conjugaci\'on fija con los $G$--automorfismos de $P$ y por eso solo indicamos
cuando dos subgrupos son $G$--conjugados, pero no todas las $G$--conjugaciones entre ellos.
\end{remark}

Determinar el diagrama de un sistema de fusi\'on requiere bastante trabajo. Es por ello
que conviene manejar ciertas subcategor\'ias que generan al sistema de fusi\'on en cierto
sentido.

\begin{definition}
Sea $K$ un subgrupo de $H$. Se dice que $K$ est\'a fuertemente $p$-encajado
en $H$ si contiene un $p$-subgrupo de Sylow de $H$ y $ | K \cap hKh^{-1} |$ es primo a 
$p$ para todo $h \in H-K$.
\end{definition}

\begin{definition}
Sea $S$ un $p$-subgrupo de Sylow de $G$. Se dice que $P \leq S$ es
\begin{itemize}
\item $G$--c\'entrico si $C_S(gPg^{-1}) = Z(gPg^{-1})$ para todo $g \in G$ tal que $gPg^{-1} \leq S$.
\item $G$--esencial si es $G$--c\'entrico y $\Out_G(P)$ tiene un subgrupo fuertemente $p$-encajado.
\item Completamente $G$--normalizado si $|N_S(P)| \geq |N_S(gPg^{-1})|$ para todo $g \in G$ tal que $gPg^{-1} \leq S$.
\end{itemize}
\end{definition}

El siguiente teorema (Teorema 2.13 en \cite{He}) explica nuestro inter\'es en estos tipos de subgrupos.

\begin{teoremaAlperin}
Sea $S$ un $p$-subgrupo de Sylow de $G$ y $P \leq S$ tal que $ gPg^{-1} \leq S$. Entonces existen
subgrupos $ Q_0, \ldots, Q_{n+1}$ y $ R_1, \ldots, R_{n+1}$, y automorfismos
$ \varphi_j \in \Aut_G(R_j)$ tales que:
\begin{itemize}
\item Cada $R_i$ es completamente $G$--normalizado y $G$--esencial, o es $S$.
\item $P = Q_0$ y $ gPg^{-1} = Q_{n+1}$.
\item $ Q_{i-1}$, $Q_i \leq R_i$ para todo $i$.
\item $ \varphi_j(Q_{j-1}) = Q_j$ para todo $j$.
\item $ c_g(x) = \varphi_{n+1} \circ \cdots \circ \varphi_1(x)$ para todo $x \in P$.
\end{itemize}
\end{teoremaAlperin}

El siguiente diagrama es una representaci\'on gr\'afica del teorema.
\[
\begin{tikzcd}
        & R_1 \ar[loop, in=180,out=90,looseness=8,"","\varphi_1"'] & R_2 \ar[loop, in=180,out=90,looseness=8,"","\varphi_2"'] & \cdots & R_{n+1} \ar[loop, in=180,out=90,looseness=7,"","\varphi_{n+1}"']             \\
P = Q_0 \ar[ur,dash] \arrow[rrrr,bend right, "c_g"] & Q_1 \ar[u,dash] \ar[ur,dash] & Q_2 \ar[u,dash] \ar[ur,dash] & \cdots & Q_{n+1} = gPg^{-1} \ar[u,dash] \ar[ul,dash]
\end{tikzcd}
\] 

\begin{corollary}
Sean $S$ y $R$ $p$-subgrupos de Sylow de $G$, $H$, respectivamente. Un isomorfismo $\alpha \colon S \to R$
es una equivalencia $p$-local si y solo si preserva subgrupos esenciales y $c_{\alpha} \colon \Aut_G(P)
\to \Aut_H(\alpha(P))$ es un isomorfismo para todo subgrupo $G$--esencial $P$ y para $P=S$.
\end{corollary}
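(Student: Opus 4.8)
The forward direction is immediate: if $\alpha$ is an equivalence $p$-local, then by the isomorphism of fusion systems $\alpha_*\colon\Ff_S(G)\to\Ff_R(H)$ it sends each $G$-subconjugation to an $H$-subconjugation and back, so in particular $c_\alpha$ restricts to an isomorphism $\Aut_G(P)\to\Aut_H(\alpha(P))$ for every $P\le S$, including essential subgroups and $S$ itself; and since $\alpha$ matches $G$-subconjugations with $H$-subconjugations, it matches $G$-centric subgroups with $H$-centric subgroups (the conditions $C_S(gPg^{-1})=Z(gPg^{-1})$ are phrased purely in terms of subconjugations), and likewise matches the strongly $p$-embedded subgroup condition on $\Out_G(P)\cong\Out_H(\alpha(P))$, hence $\alpha$ preserves essential subgroups.

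**The converse is the substantive direction.** Assume $\alpha$ preserves essential subgroups and that $c_\alpha\colon\Aut_G(P)\to\Aut_H(\alpha(P))$ is an isomorphism whenever $P$ is $G$-essential or $P=S$. I would show directly that $c_\alpha$ sends $G$-subconjugations to $H$-subconjugations (the argument for $c_{\alpha^{-1}}$ being symmetric, using that $\alpha^{-1}$ also preserves essential subgroups because ``$G$-essential'' was just shown to be preserved by $\alpha$). So fix $P$, $Q\le S$ and a $G$-subconjugation $c_g\colon P\to Q$ with $gPg^{-1}\le S$. Apply the Alperin--Goldschmidt fusion theorem: there exist subgroups $R_1,\dots,R_{n+1}$, each either completely $G$-normalized and $G$-essential or equal to $S$, and automorphisms $\varphi_j\in\Aut_G(R_j)$, with a chain $P=Q_0,\dots,Q_{n+1}=gPg^{-1}$ satisfying $Q_{j-1},Q_j\le R_j$, $\varphi_j(Q_{j-1})=Q_j$, and $c_g=\varphi_{n+1}\circ\cdots\circ\varphi_1$ on $P$.

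**Transporting the factorization.** For each $j$, the subgroup $R_j$ is either $S$ or $G$-essential; by hypothesis $\alpha(R_j)$ is either $R$ or $H$-essential, and $c_\alpha$ restricts to an isomorphism $\Aut_G(R_j)\to\Aut_H(\alpha(R_j))$. Hence $\psi_j:=\alpha\varphi_j\alpha^{-1}\in\Aut_H(\alpha(R_j))$ is an $H$-automorphism, i.e.\ an $H$-conjugation. Set $\alpha(Q_j)\le\alpha(R_j)$; then $\psi_j(\alpha(Q_{j-1}))=\alpha\varphi_j(Q_{j-1})=\alpha(Q_j)$, so the $\psi_j$ assemble, by composition of $H$-subconjugations, into an $H$-subconjugation $\psi_{n+1}\circ\cdots\circ\psi_1\colon\alpha(P)\to\alpha(gPg^{-1})$. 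On the element level, for $x\in P$,
\[
\alpha c_g \alpha^{-1}(\alpha(x)) = \alpha c_g(x) = \alpha\bigl(\varphi_{n+1}\circ\cdots\circ\varphi_1(x)\bigr) = \psi_{n+1}\circ\cdots\circ\psi_1(\alpha(x)),
\]
so $\alpha c_g\alpha^{-1}=\psi_{n+1}\circ\cdots\circ\psi_1$ as a homomorphism $\alpha(P)\to\alpha(Q)$, which is therefore an $H$-subconjugation. This shows $c_\alpha$ sends $G$-subconjugations to $H$-subconjugations; running the same argument with $\alpha^{-1}$ gives the reverse inclusion, so $\alpha$ preserves fusion.

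**Expected main obstacle.** The delicate point is verifying that ``$\alpha$ preserves $G$-essential subgroups'' is actually available as a hypothesis in the form needed at each stage of the Alperin--Goldschmidt chain, and in particular that the notion is symmetric under $\alpha\leftrightarrow\alpha^{-1}$ — i.e.\ that preserving essentials and the relevant automorphism isomorphisms in one direction forces the analogous statement for $\alpha^{-1}$. Concretely one must check that $G$-centricity is detected by the subconjugation data alone (so that once we know $c_\alpha$ respects subconjugations on essentials and on $S$, centricity transfers), and that $\Out_G(P)\cong\Out_H(\alpha(P))$ compatibly so the strongly $p$-embedded condition transfers; bootstrapping these from the hypothesis is the technical heart of the argument, though none of it requires computation beyond unwinding definitions.
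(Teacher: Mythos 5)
Your proposal is correct and follows exactly the route the paper intends: the statement is left as an immediate consequence of the Alperin--Goldschmidt fusion theorem, and your converse argument (factor an arbitrary $G$--subconjugation through automorphisms of completely normalized essential subgroups and of $S$, transport each factor by $c_\alpha$ using the hypothesis, and invert the isomorphisms $c_\alpha$ to get the symmetric statement for $\alpha^{-1}$) is precisely that argument, with the forward direction handled as in the paper's discussion of $\alpha_*$. The symmetry issue you flag as the ``main obstacle'' is not actually one, since the paper's reading of ``preserva subgrupos esenciales'' is the two-sided condition ($P$ es $G$--esencial si y solo si $\alpha(P)$ es $H$--esencial), which together with inverting $c_\alpha$ gives everything needed for the $\alpha^{-1}$ direction.
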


Preservar subgrupos esenciales quiere decir que $P$ es $G$--esencial si y solo si $\alpha(P)$ es $H$--esencial.
Con esto reducimos el problema a estudiar el diagrama de la subcategor\'ia completa $\Ff^e_S(G)$ de $\Ff_S(G)$ 
cuyos objetos son los subgrupos $G$--esenciales y $S$.

\begin{example}
Sea $S$ un $2$-subgrupo de Sylow de $\Sigma_4$. Podemos usar \cite{GAP4} para determinar los diagramas de $\Ff_S^e(\Sigma_4)$
y $\Ff_S^e(S)$.
\[
\begin{tikzcd}
\qquad \Ff_S^e(\Sigma_4) & & & \qquad \Ff_S^e(S) \\
 & & \\
S \ar[loop, in=0,out=90,looseness=8,"\Z/2 \oplus \Z/2",""'] &  &  & S \ar[loop, in=0,out=90,looseness=8,"\Z/2 \oplus \Z/2",""'] \\
P_3 \ar[loop, in=270,out=0,looseness=8,"\Sigma_3",""'] \ar[u,dash] & & &   
\end{tikzcd}
\]
\end{example}

Si los $p$-subgrupos de Sylow de $G$ son abelianos, entonces no hay
subgrupos esenciales. Esto proporciona una explicaci\'on m\'as conceptual
de la Proposici\'on \ref{SylowsAbelianos}. El sistema de fusi\'on tambi\'en nos puede ayudar a identificar invariantes
$p$-locales. Si una asignaci\'on se puede expresar en t\'erminos del sistema
de fusi\'on, debe ser un invariante $p$-local. Ejemplos no inmediatamente
evidentes de esto son la homolog\'ia y cohomolog\'ia de grupos con coeficientes
en $\F_p$, el campo con $p$ elementos, definidos mediante
\begin{align*}
H_n(G;\F_p) & = \Tor_n^{\Z G}(\Z,\F_p), \\
H^n(G;\F_p) & = \Ext_n^{\Z G}(\Z,\F_p). 
\end{align*}
Aqu\'i $\Z G$ denota al anillo de grupo de $G$, que es el grupo abeliano libre
con base $G$, con el producto inducido por la multiplicaci\'on de $G$. La
notaci\'on $\Z$ y $\F_p$ se refiere a estos grupos abelianos con su estructura
de $\Z G$--m\'odulos dada por
\[ \left( \sum n_g g \right) \cdot x = \sum n_g x, \]
donde en el lado derecho usamos la estructura de grupo abeliano de $\Z$ y $\F_p$.
Aunque nos referiremos a ellos como los grupos de homolog\'ia y cohomolog\'ia con
coeficientes en $\F_p$, tienen estructura de espacio vectorial sobre $\F_p$. Tambi\'en podemos considerar
\[ H^*(G;\F_p) = \bigoplus_{n \geq 0} H^n(G;\F_p), \]
que no solo es un $\F_p$--espacio vectorial, sino una $\F_p$--\'algebra graduada. 

\begin{teoremaCartan}
Si $G$ es un grupo finito y $S$ es un $p$-subgrupo de Sylow de $G$, entonces
\[ H_n(G;\F_p) \cong \highercolim{\Ff_S(G)}{} H_n(Q;\F_p), \]
\[ H^n(G;\F_p) \cong \higherlim{\Ff_S(G)}{} H^n(Q;\F_p), \]
para cualquier $n \geq 0$. El isomorfismo en cohomolog\'ia tambi\'en induce un isomorfismo
de $\F_p$--\'algebras graduadas
\[ H^*(G;\F_p) \cong \higherlim{\Ff_S(G)}{} H^*(Q;\F_p), \]
\end{teoremaCartan}

La demostraci\'on para cohomolog\'ia se puede encontrar en el Teorema III.10.3 de \cite{Br} y
es an\'aloga para homolog\'ia.

\begin{corollary}
Los grupos de homolog\'ia y cohomolog\'ia de grupos con coeficientes en $\F_p$, as\'i
el anillo de cohomolog\'ia con coeficientes en $\F_p$, son invariantes $p$-locales.  
\end{corollary}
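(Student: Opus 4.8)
El plan es combinar el Teorema de Cartan-Eilenberg con la caracterizaci\'on de la equivalencia $p$-local mediante isomorfismos de sistemas de fusi\'on. Supongamos que $G \simeq_p H$ y fijemos $p$-subgrupos de Sylow $S \leq G$ y $R \leq H$ junto con un isomorfismo $\alpha \colon S \to R$ que preserva fusi\'on. Por el corolario que caracteriza la equivalencia $p$-local mediante isomorfismos de sistemas de fusi\'on, $\alpha$ induce un isomorfismo de categor\'ias $\alpha_* \colon \Ff_S(G) \to \Ff_R(H)$ que env\'ia $Q$ a $\alpha(Q)$ y $f$ a $\alpha f \alpha^{-1}$.

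Primero observar\'ia que, para cada subgrupo $Q \leq S$, el isomorfismo de grupos $\alpha|_Q \colon Q \to \alpha(Q)$ induce isomorfismos $H_n(Q;\F_p) \cong H_n(\alpha(Q);\F_p)$ y $H^n(\alpha(Q);\F_p) \cong H^n(Q;\F_p)$, as\'i como un isomorfismo de $\F_p$-\'algebras graduadas $H^*(\alpha(Q);\F_p) \cong H^*(Q;\F_p)$, puesto que la homolog\'ia y cohomolog\'ia de grupos son funtoriales y cualquier isomorfismo de grupos induce isomorfismos en ellas.

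Despu\'es comprobar\'ia que estos isomorfismos son naturales en $Q$, es decir, que el funtor $Q \mapsto H_n(Q;\F_p)$ en $\Ff_S(G)$ es naturalmente isomorfo a la composici\'on de $\alpha_*$ con el funtor $Q' \mapsto H_n(Q';\F_p)$ en $\Ff_R(H)$, y an\'alogamente en cohomolog\'ia y para el funtor con valores en $\F_p$-\'algebras graduadas. Concretamente, para un morfismo $c_g \colon P \to Q$ de $\Ff_S(G)$, la igualdad de homomorfismos $(\alpha c_g \alpha^{-1}) \circ (\alpha|_P) = (\alpha|_Q) \circ c_g$ entre $P$ y $\alpha(Q)$, al aplicar $H_n(-;\F_p)$, produce el cuadrado conmutativo que expresa la naturalidad. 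Como un isomorfismo de categor\'ias precompuesto con un diagrama naturalmente isomorfo tiene el mismo col\'imite, l\'imite y l\'imites superiores, se obtiene
\[ \highercolim{\Ff_S(G)}{} H_n(Q;\F_p) \cong \highercolim{\Ff_R(H)}{} H_n(Q';\F_p), \]
\[ \higherlim{\Ff_S(G)}{} H^n(Q;\F_p) \cong \higherlim{\Ff_R(H)}{} H^n(Q';\F_p), \]
y lo mismo para $Q \mapsto H^*(Q;\F_p)$. Aplicando el Teorema de Cartan-Eilenberg a ambos lados se concluye $H_n(G;\F_p) \cong H_n(H;\F_p)$, $H^n(G;\F_p) \cong H^n(H;\F_p)$ y $H^*(G;\F_p) \cong H^*(H;\F_p)$ como $\F_p$-\'algebras graduadas, que es precisamente la afirmaci\'on de que estas construcciones son invariantes $p$-locales.

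El \'unico punto delicado que anticipo es la verificaci\'on de la naturalidad en el tercer paso: asegurarse de que los isomorfismos elegidos en (co)homolog\'ia conmutan efectivamente con las subconjugaciones que aparecen como morfismos del sistema de fusi\'on, y de que todo es compatible con la estructura de \'algebra en el caso de la cohomolog\'ia; una vez establecido esto, la conclusi\'on es formal.
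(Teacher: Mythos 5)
Tu argumento es correcto y coincide en esencia con el que el art\'iculo da por sobreentendido: la equivalencia $p$-local produce un isomorfismo de sistemas de fusi\'on $\alpha_* \colon \Ff_S(G) \to \Ff_R(H)$, y al ser la (co)homolog\'ia con coeficientes en $\F_p$ expresable como (co)l\'imite sobre el sistema de fusi\'on por el Teorema de Cartan-Eilenberg, los (co)l\'imites correspondientes resultan isomorfos. Tu verificaci\'on expl\'icita de la naturalidad de los isomorfismos inducidos por $\alpha|_Q$ es precisamente el detalle que el texto omite por considerarlo formal.
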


En dimensiones bajas (ver por ejemplo \cite{Br}) obtenemos invariantes $p$-locales con descripciones alternativas.
\begin{align*}
H_1(G;\F_p) & \cong G_{\ab} \otimes_{\Z} \Z/p, \\
H^1(G;\F_p) & \cong \Hom(G,\Z/p), \\
H^2(G;\F_p) & \cong \{ \text{ clases de equivalencia de extensiones centrales de $G$ por $\Z/p$ } \} .
\end{align*}
Es conocido que los grupos de homolog\'ia y cohomolog\'ia de grupos tienen una interpretaci\'on topol\'ogica
en t\'erminos de espacios clasificantes de grupos. En la siguiente secci\'on comentaremos que no solo esto es
cierto, sino que cualquier invariante $p$-local tiene tal interpretaci\'on.

\section{Espacios clasificantes y $p$-completaciones}
\label{section:clasificante}

Para el estudio de acciones libres de un grupo $G$ con un comportamiento local agradable (que tienen rebanadas), es conveniente
usar el lenguaje de haces $G$--principales. La clasificaci\'on de haces $G$--principales est\'a dada en t\'erminos
de clases de homotop\'ia de funciones al espacio clasificante $BG$ de $G$. Estrictamente hablando el espacio clasificante
no es un espacio, sino un tipo de homotop\'ia, pero en \cite{Mil} se construye un funtor de la categor\'ia
de grupos topol\'ogicos a la categor\'ia de espacios que a cada grupo $G$ le asigna un espacio de este tipo de homotop\'ia.
A este espacio se le deber\'ia llamar un modelo para el espacio clasificante de $G$, pero por simplicidad solamente le
llamaremos el espacio clasificante de $G$. Tambi\'en se le conoce como la construcci\'on de Milnor. 

Describiremos ahora una simplificaci\'on de la construcci\'on de Milnor del espacio clasificante que funciona para grupos
finitos $G$, que es nuestro caso de inter\'es en este documento. Dados dos espacios topol\'ogicos $X$ y $Y$, el producto join 
de $X$ y $Y$ es el espacio
\[ X \ast Y = X \times Y \times [0,1] /{\sim}, \]
donde $(x_0,y,0) \sim (x_0,y',0)$ para todo $y$, $y' \in Y$ y todo $x_0 \in X$ 
y $(x,y_0,1) \sim (x',y_0,1) $ para todo $ x$, $x' \in X$ y todo $y_0 \in Y$.
Lo dotamos de la topolog\'ia cociente. Usamos la notaci\'on
\[ tx+(1-t)y := [(x,y,1-t)] \] 
para pensar que $X \ast Y$ es el espacio de segmentos abstractos entre los puntos de $X$ y $Y$. Para construir el espacio 
clasificante del grupo finito $G$, realizamos el siguiente proceso inductivo.
\begin{align*}
E^1 G & = G. \\ 
E^{n+1} G & = E^n G \ast G.   
\end{align*}
Existen funciones continuas
\begin{align*}
E^n G & \to E^{n+1} G, \\
t_1 g_1 + \ldots + t_n g_n & \mapsto t_1 g_1 + \ldots + t_n g_n + 0 \cdot 1,
\end{align*}
y podemos considerar el espacio
\[ EG = \highercolim{n \geq 1}{} E^n G. \] 
Las funciones $E^n G \to E^{n+1}G$ son encajes, as\'i que $EG$ es la uni\'on de los $E^n G$
con la topolog\'ia final, es decir, un subconjunto de $EG$ es abierto si y solo si la intersecci\'on
con cada $E^n G$ es abierto en $E^n G$. Los elementos del espacio $EG$ son por lo tanto sumas formales finitas
\[ \sum \lambda_i g_i, \]
donde $\lambda_i \in [0,1]$ y $\sum \lambda_i = 1$, es decir, combinaciones lineales convexas formales. El grupo $G$ 
act\'ua sobre $EG$ por la derecha mediante
\[ \left( \sum \lambda_i g_i \right) \cdot g = \sum \lambda_i (g_i g), \]
y es f\'acil comprobar que es una acci\'on libre. El espacio clasificante de $G$ 
es el espacio de \'orbitas
\[ BG = EG/G \]
con la topolog\'ia cociente. Aunque este modelo de Milnor es expl\'icito, no siempre es el m\'as conveniente.
Siguiendo los comentarios en el primer p\'arrafo de esta secci\'on, igualmente llamaremos espacio clasificante 
a cualquier espacio que sea homot\'opicamente equivalente a este. El siguiente resultado (Teorema II.1.2 en \cite{AM}) 
nos permite identificar espacios clasificantes.

\begin{proposition}
Sea $E$ un CW-complejo contr\'actil con una acci\'on libre de un grupo finito $G$. Entonces $E/G$ es
un modelo para $BG$. 
\end{proposition}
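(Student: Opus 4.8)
The plan is to prove the statement by comparing $E/G$ with the Milnor model $EG$ directly, showing that $E$ and $EG$ are interchangeable from the point of view of $G$-equivariant homotopy theory and then passing to orbit spaces to obtain $E/G \simeq EG/G = BG$. Everything hinges on an equivariant rigidity lemma that I would isolate first: if $X$ is a free $G$-CW-complex and $Y$ is a contractible free $G$-CW-complex, then there exists a $G$-equivariant map $X \to Y$, and any two such maps are $G$-equivariantly homotopic. There is one soft technical point, namely that the given action should make $E$ into a $G$-CW-complex; for a finite group this can always be arranged (or simply included among the hypotheses, as in the cited reference), and I return to it at the end.

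First I would check that $EG$ is contractible, a fact used implicitly in the construction but not proved in the text. Since $G$ is a non-empty finite discrete space it is $(-1)$-connected, and since the join of an $m$-connected space with an $n$-connected space is $(m+n+2)$-connected, an induction shows that $E^n G = G^{\ast n}$ is $(n-2)$-connected. Because every map from a compact space into $EG = \colim_n E^n G$ factors through a finite stage, $\pi_k(EG) = \colim_n \pi_k(E^n G) = 0$ for all $k$; as $EG$ is a CW-complex, Whitehead's theorem forces it to be contractible. (The space $EG$ is moreover a free $G$-CW-complex, so the rigidity lemma applies to it.)

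Next I would prove the rigidity lemma by the standard cell-by-cell obstruction argument. On an orbit of $0$-cells of $X$ one sends the orbit to $G \cdot y_0$ for any chosen $y_0 \in Y$, which is well defined precisely because $G$ acts freely on $X$. To extend a partial $G$-map over an orbit of $n$-cells with $n \geq 1$, one composes an attaching map $S^{n-1} \to X^{(n-1)}$ with the map already defined on the $(n-1)$-skeleton to obtain a class in $\pi_{n-1}(Y)$, which vanishes since $Y$ is contractible, so the map extends over the cell and then over the whole orbit by equivariance. Uniqueness up to $G$-homotopy is the same computation applied to the free $G$-CW-complex $X \times [0,1]$, whose cells not lying in $X \times \{0,1\}$ again present obstructions in vanishing homotopy groups of $Y$.

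Finally I would assemble the pieces. Applying the lemma with $Y = EG$ yields a $G$-map $f \colon E \to EG$, and applying it to $(X,Y) = (EG,E)$ yields a $G$-map $h \colon EG \to E$; since $E$ is contractible, the lemma applied to maps $E \to E$ forces $h \circ f$ and $\id_E$ to be $G$-homotopic, and symmetrically $f \circ h$ and $\id_{EG}$ are $G$-homotopic. Descending to orbit spaces (a $G$-map induces a map of quotients, a $G$-homotopy a homotopy) gives $\bar f \colon E/G \to BG$ and $\bar h \colon BG \to E/G$ with $\bar h \circ \bar f \simeq \id$ and $\bar f \circ \bar h \simeq \id$, so $E/G \simeq BG$, i.e. it is a model for $BG$. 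The main obstacle is the technical point flagged above — securing a $G$-CW structure on $E$ so that the induction runs; the obstruction computations themselves are routine because the relevant homotopy groups all vanish. If one prefers to sidestep it, an alternative is to note that a free action of a finite group on a Hausdorff space is automatically a covering action, so $E \to E/G$ is the universal cover and $E/G$ is a $K(G,1)$; the same holds for $BG$, and the conclusion then follows from the uniqueness of $K(G,1)$'s up to homotopy equivalence, which is proved by the same obstruction argument carried out on the quotients.
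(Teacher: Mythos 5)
The paper does not prove this proposition at all: it is quoted as Teorema II.1.2 of \cite{AM}, so there is no internal argument to compare yours with. Your proof is the standard one, and in outline it is exactly how the cited reference proceeds: show $EG$ is contractible (your join-connectivity count, $E^nG$ being $(n-2)$-connected, is fine; an explicit ``shift'' contraction also works), prove by induction over free orbits of cells that maps from a free $G$-CW complex into a contractible $G$-space exist and are unique up to $G$-homotopy, and then descend the resulting $G$-homotopy equivalence $E \simeq_G EG$ to the orbit spaces. Two minor remarks: in your rigidity lemma the freeness of $Y$ plays no role (only freeness and the $G$-CW structure of the source are used, plus contractibility of the target), and the covering-space alternative you sketch at the end is equally standard.

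The one point that deserves more care is precisely the one you flag and then wave away with ``for a finite group this can always be arranged'': a free action of a finite group on a CW complex need not be cellular, so $E$ need not carry a $G$-CW structure, and this is exactly what your construction of $f\colon E\to EG$ and the uniqueness statement for maps out of $E$ require. What equivariant CW approximation gives in general is a free $G$-CW complex $Z$ with a $G$-map $Z\to E$ that is a weak equivalence, whence $BG\simeq Z/G\to E/G$ is a weak homotopy equivalence; promoting this to a genuine homotopy equivalence needs $E/G$ to have the homotopy type of a CW complex, which is automatic for cellular actions (then $E/G$ is a CW complex) but not obvious otherwise. The same caveat applies to your $K(G,1)$ route, since uniqueness of $K(G,1)$'s is a statement about spaces of CW homotopy type. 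The honest fix is the one you also propose: take the cellularity of the action as part of the hypotheses, as the cited reference does (there a ``$G$-complex'' has a cellular action by definition); with that hypothesis your argument is complete, and for the uses in this paper (e.g.\ $S^\infty$ with the antipodal $\Z/2$-action, where the action is cellular) nothing is lost.
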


\begin{example}
El grupo $\Z/2$ act\'ua libremente sobre $S^{\infty}$ mediante la acci\'on antipodal, de
donde $\R P^{\infty}$ es un modelo para $B\Z/2$.
\end{example}

Aparte de su utilidad para clasificar haces $G$--principales, el espacio $BG$ est\'a \'intimamente relacionado
con $G$ por la heur\'istica de que las propiedades e invariantes homot\'opicos de $BG$ deben corresponder
a propiedades e invariantes algebraicos de $G$. Sin intentar ser exhaustivos, he aqu\'i algunas relaciones
que ilustran esta filosof\'ia.
\begin{enumerate}
\item $\pi_1(BG) \cong G$.
\item $H_1(BG) \cong G_{\ab}$.
\item $H_n(BG;M) \cong H_n(G;M)$ y $H^n(BG;M) \cong H^n(G;M)$.
\item $BG \simeq BH$ si y solo si $G \cong H$.
\item $K(BG) \cong R(G)^{\wedge}_{I_G}$.
\end{enumerate}
Estos resultados se pueden encontrar en \cite{AM}, \cite{Br} y \cite{AS}. Tenemos pues un tipo de homotop\'ia que captura la informaci\'on de 
$G$. Es natural preguntarse ahora si existe un tipo de homotop\'ia que contenga la informaci\'on $p$-local de $G$, y esto se lograr\'a siguiendo un proceso
de localizaci\'on en topolog\'ia algebraica. 

Dado un espacio $X$, en \cite{BK} se construye para cada primo $p$ el espacio $X \pcom$, conocido como su $p$-completaci\'on,
y un espacio $X_{\Q}$ llamado su racionalizaci\'on. La construcci\'on de estos espacios no es relevante para esta discusi\'on, sino
m\'as bien lo que se pretende lograr con ellos:
\begin{itemize}
\item Son construcciones funtoriales.
\item El espacio $X \pcom$ captura la informaci\'on $p$-local de $X$, es decir, la informaci\'on
que se puede recuperar a partir de $H_*(X;\F_p)$. M\'as espec\'ificamente, una funci\'on $X \to Y$
induce un isomorfismo en los grupos de homolog\'ia con coeficientes en $\F_p$ si y solo si la funci\'on
inducida $X \pcom \to Y \pcom$ es una equivalencia homot\'opica.
\item Bajo ciertas condiciones, uno puede recuperar el tipo de homotop\'ia de un espacio
a partir de sus $p$-completaciones y su racionalizaci\'on. 
\end{itemize}
Este \'ultimo punto nos permite elaborar estrategias de local a global. Para resolver
un problema, estudiamos primero el problema correspondiente para las completaciones y sus 
racionalizaciones, y despu\'es intentamos recomponer una soluci\'on al problema original
a partir de sus soluciones locales.

\begin{example}
Si $G$ es un grupo finito, el espacio $BG_{\Q}$ es contr\'actil y tambi\'en
lo es $BG \pcom$ si $p$ es un primo que no divide al orden de $G$. Usando
la Proposici\'on VII.4.1 de \cite{BK}, podemos recuperar con el resto
de las $p$-completaciones el tipo de homotop\'ia de
\[ \Z_{\infty} BG \simeq \prod_{p \text{ primo }} BG \pcom, \]
donde $\Z_{\infty} X$ es la $\Z$--completaci\'on de Bousfield-Kan. El espacio
$\Z_{\infty} BG$ tiene los mismos grupos de homolog\'ia que $BG$, pero no 
es homot\'opicamente equivalente a $BG$ en general. Curiosamente, $BG \simeq \Z_{\infty} BG$ si y
solo si $G$ es nilpotente por la Proposici\'on V.3.3 de \cite{BK}.
\end{example}

El siguiente resultado fue enunciado originalmente en \cite{MP}. La demostraci\'on original presentaba 
dificultades y durante un tiempo se conoci\'o como la conjetura de Martino-Priddy hasta que finalmente 
se prob\'o en \cite{O1} y \cite{O2}, usando la teor\'ia de grupos $p$-locales finitos. Diremos m\'as al respecto en la \'ultima secci\'on.

\begin{theorem}
\label{MartinoPriddy}
Sean $G$ y $H$ grupos finitos. Entonces $BG \pcom \simeq BH \pcom$
si y solo si $ G \simeq_p H$.
\end{theorem}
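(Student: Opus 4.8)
El plan es demostrar ambas implicaciones a trav\'es de la teor\'ia de grupos $p$-locales finitos de \cite{BLO1}: a cada grupo finito $G$ con $p$-subgrupo de Sylow $S$ se le asocia, adem\'as de $\Ff_S(G)$, una categor\'ia $\Ll_S^c(G)$ llamada su sistema de enlace c\'entrico, que refina el sistema de fusi\'on y cuya realizaci\'on geom\'etrica satisface $|\Ll_S^c(G)|\pcom \simeq BG\pcom$. Junto con el corolario ya probado, la equivalencia $G \simeq_p H$ equivale a un isomorfismo de sistemas de fusi\'on $\Ff_S(G) \cong \Ff_R(H)$, y el puente hacia los tipos de homotop\'ia es la posibilidad de ascender tal isomorfismo a uno de sistemas de enlace.

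Para la implicaci\'on ($\Leftarrow$), partir\'ia de un isomorfismo $\Ff_S(G) \cong \Ff_R(H)$ e intentar\'ia levantarlo a un isomorfismo $\Ll_S^c(G) \cong \Ll_R^c(H)$. La obstrucci\'on a que esto sea posible —as\'i como la obstrucci\'on a la existencia misma de un sistema de enlace asociado a un sistema de fusi\'on abstracto— vive en los grupos de l\'imite superior
\[ \higherlim{\mathcal{O}^c(\Ff_S(G))}{i} \mathcal{Z} \quad (i = 1, 2, 3), \]
donde $\mathcal{Z}$ es el funtor que env\'ia cada subgrupo $G$-c\'entrico $P$ a su centro $Z(P)$. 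El teorema fundamental de Oliver (\cite{O1} para $p$ impar y \cite{O2} para $p = 2$, este \'ultimo apoyado en la clasificaci\'on de grupos finitos simples) afirma que estos grupos se anulan en grados positivos cuando el sistema de fusi\'on proviene de un grupo finito. Establecido esto, el levantamiento existe, y entonces $|\Ll_S^c(G)|\pcom \simeq |\Ll_R^c(H)|\pcom$, de donde $BG\pcom \simeq BH\pcom$.

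Para la implicaci\'on ($\Rightarrow$), el objetivo es recuperar el sistema de fusi\'on a partir del tipo de homotop\'ia de $X = BG\pcom$. El ingrediente clave es el c\'alculo de espacios de funciones (el teorema de Miller y los teoremas de Dwyer--Zabrodsky y Notbohm): para cada $p$-grupo finito $P$ se tiene
\[ \Map(BP, BG\pcom) \simeq \coprod_{\rho \in \Rep(P,G)} BC_G(\rho(P))\pcom, \]
de modo que $[BP, BG\pcom] \cong \Rep(P,G) = \Hom(P,G)/\Inn(G)$. A partir de $X$ se reconoce un $p$-subgrupo de Sylow $S$ como un $p$-grupo de orden m\'aximo que admite un morfismo $BS \to X$ que induce un monomorfismo en cohomolog\'ia con coeficientes en $\F_p$ —tal morfismo se realiza, salvo homotop\'ia, por una inclusi\'on de Sylow $S \hookrightarrow G$—, y luego se reconstruyen los conjuntos $\Hom_G(P,Q)$ para $P, Q \leq S$, junto con la composici\'on, como clases de homotop\'ia de morfismos $BP \to BQ$ compatibles con los morfismos a $X$; esto determina la categor\'ia $\Ff_S(G)$ de manera funtorial en $X$. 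Una equivalencia $BG\pcom \simeq BH\pcom$ produce entonces un isomorfismo $\Ff_S(G) \cong \Ff_R(H)$, y por el corolario anterior $G \simeq_p H$.

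El obst\'aculo principal es, sin duda, la anulaci\'on de los grupos $\higherlim{\mathcal{O}^c(\Ff)}{i} \mathcal{Z}$ para $i = 2, 3$: constituye el n\'ucleo t\'ecnico de \cite{O1} y \cite{O2}, exige un an\'alisis caso por caso que para $p = 2$ descansa en la clasificaci\'on de grupos finitos simples, y fue lo que mantuvo el enunciado en el estatus de conjetura durante a\~nos. Las descomposiciones homol\'ogicas de \cite{BLO1} y los c\'alculos de espacios de funciones, en cambio, eran esencialmente conocidos de antemano.
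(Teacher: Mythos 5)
Tu propuesta sigue esencialmente el mismo camino que el art\'iculo, que no da una demostraci\'on detallada sino el esquema $G \simeq_p H \Rightarrow \Ff_S(G) \cong \Ff_R(H) \Rightarrow \Ll_S(G) \cong \Ll_R(H) \Rightarrow BG\pcom \simeq BH\pcom$, apoyado en la existencia y unicidad del sistema c\'entrico de enlace probada por Oliver en \cite{O1} y \cite{O2} (v\'ia la anulaci\'on de las obstrucciones en l\'imites superiores del funtor centro), y en los c\'alculos de espacios de funciones para la implicaci\'on rec\'iproca. Identificas correctamente que el n\'ucleo t\'ecnico es precisamente esa anulaci\'on, que es el contenido de los trabajos citados y no algo que se pueda reproducir aqu\'i.
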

 
Este teorema nos dice que el tipo de homotop\'ia de $BG \pcom$ es un invariante $p$-local completo para grupos finitos. Pero
no es un invariante pr\'actico, pues el problema de determinar cu\'ando dos espacios son homot\'opicamente equivalentes es 
generalmente complicado, y la construcci\'on de $BG \pcom$ a partir de $BG$ tambi\'en lo es.

Sin embargo, la manera en que intentamos entender $BG \pcom$ es a trav\'es de sus invariantes homot\'opicos y
esto tiene valor para nuestro problema ya que un invariante homot\'opico de $BG \pcom$ tambi\'en ser\'a un invariante
$p$-local de $G$. La nueva heur\'istica nos dice que los invariantes y propiedades homot\'opicas de $BG \pcom$ corresponden
a invariantes y propiedades $p$-locales de $G$. Ofrecemos algunos ejemplos de tales invariantes con sus correspondientes
invariantes $p$-locales.
\begin{enumerate}
\item $ H_k(G;\F_p) \cong H_k(BG \pcom;\F_p)$.
\item El $p$-rango de $G$ es igual a la dimensi\'on de Krull de $H^*(BG \pcom;\F_p)$.
\item $G/{O^p(G)} \cong \pi_1(BG \pcom) $.
\item $(G_{\ab}) \pcom \cong H_1(BG \pcom;\Z \pcom) $.
\item $G$ es $p$-nilpotente si y solo si $BS \simeq BG \pcom$.
\item $\Rep(P,G)$ est\'a en biyecci\'on con $[BP,BG \pcom]$.
\end{enumerate}
Estos resultados se pueden consultar en \cite{AM}, \cite{Br}, \cite{BK}, \cite{BLO1}, \cite{CSV} y \cite{Q}.
Aparte de poder obtener nuevos invariantes $p$-locales de esta manera, tambi\'en nos sirve para
tener una raz\'on conceptual de por qu\'e algo es un invariante $p$-local. Por ejemplo, $O^p(G)$
es el subgrupo normal m\'as peque\~no de $G$ cuyo \'indice es una potencia de $p$. No es evidente 
que $G/{O^p(G)}$ solo dependa de $\Ff_S(G)$, pero su descripci\'on como $\pi_1(BG \pcom)$ nos dice 
que as\'i es.

\section{Grupos $p$-locales finitos}
\label{section:grupoplocal}

La teor\'ia de grupos $p$-locales finitos tiene sus antecedentes en las categor\'ias de Frobenius introducidas 
en notas no publicadas en 1993, y que se publicar\'ian en \cite{P}, dirigidas al estudio de las representaciones 
modulares de grupos finitos. Posteriormente se encontr\'o que estas categor\'ias eran valiosas para otros prop\'ositos en topolog\'ia algebraica:
\begin{itemize}
\item Encontrar y analizar espacios con propiedades homot\'opicas similares a $BG \pcom$.
\item Demostrar la conjetura de Martino-Priddy.
\item Determinar descomposiciones (co)homol\'ogicas de $BG$.
\end{itemize}
Merece la pena comentar un poco m\'as sobre el primer objetivo. El grupo de Conway $\Co_3$ es el \'unico grupo simple con su $2$-subgrupo de Sylow. Para demostrar
esto, era suficiente descartar un hipot\'etico grupo simple $G$ distinto de $\Co_3$ con este $2$-subgrupo 
de Sylow $S$ y una cierta condici\'on sobre el centralizador de un elemento de orden dos. En \cite{S} se determin\'o
c\'omo deb\'ia ser $\Ff_S(G)$ si $G$ existiera y se demostr\'o que esto era incompatible con $\Ff_T(G)$, donde $T$
ser\'ia un $p$-subgrupo de Sylow de $G$ para un primo $p$ relacionado con la condici\'on sobre el centralizador. Lo interesante 
de esta demostraci\'on es que no se encontr\'o ninguna inconsistencia en $\Ff_S(G)$, lo cual llev\'o a plantear \cite{Be}
si podr\'ian existir abstracciones de los sistemas de fusi\'on $\Ff_S(G)$ que no proviniesen de un grupo finito, 
pero que estuviera ligado a un espacio topol\'ogico, al igual que $\Ff_S(G)$ lo est\'a con $BG \pcom$. 

Esta abstracci\'on es un sistema de fusi\'on saturado, introducida en \cite{BLO1}. Al igual que $\Ff_S(G)$,
es una categor\'ia de subgrupos de un $p$-grupo finito $S$ y sus morfismos son ciertos monomorfismos. Pero
a diferencia de $\Ff_S(G)$, estos morfismos no necesariamente est\'an dados por subconjugaciones por elementos
de un grupo finito $G$ cuyo $p$-subgrupo de Sylow sea $S$, sino que solo se les requiere que cumplan ciertos
axiomas. Existen sistemas de fusi\'on saturados que no son isomorfos a sistemas de fusi\'on de la forma $\Ff_S(G)$
y a estos se les conoce como ex\'oticos. Las construcciones de \cite{S} y \cite{Be} dieron lugar a tales ejemplos \cite{Lo}.
Otros ejemplos fueron construidos en \cite{RV} sobre $7^{1+2}_+$, el grupo extraespecial de orden $7^3$ y exponente $7$.
Para uno de estos, conocido ahora como $\RV_3$, el diagrama de $\RV_3^e$ est\'a dado por
\[ 
\begin{tikzcd}
 & & & & 7^{1+2}_+ \ar[loop, in=0,out=90,looseness=8,"K",""'] & & & \\
V_0 \ar[r,dash] \ar[urrrr,dash] & V_1 \ar[r,dash] \ar[urrr,dash] & V_2 \ar[r,dash] \ar[urr,dash] & V_3 \ar[r,dash] \ar[ur,dash] & V_4 \ar[r,dash] \ar[u,dash] & V_5 \ar[r,dash] \ar[ul,dash] & V_6 \ar[r,dash] \ar[ull,dash] &  V_7 \ar[loop, in=0,out=90,looseness=8,"\SL_2(\F_7) \rtimes \Z/2",""'] \ar[ulll,dash]  
\end{tikzcd}
\]
donde $K$ es una extensi\'on de $\SD_{32} \times \Z/3$ por $7^{1+2}_+$, y los grupos $V_j$ son isomorfos a $\Z/7 \oplus \Z/7$.

A fin de asociarle un espacio topol\'ogico a un sistema de fusi\'on saturado, en \cite{BLO1} se introduce la noci\'on
de grupo $p$-local finito, dado por una terna $\G = (S,\Ff,\Ll)$, donde $S$ es un $p$-grupo finito, $\Ff$ es un sistema
de fusi\'on saturado sobre $S$ y $\Ll$ es un sistema c\'entrico de enlace para $\Ff$. Este \'ultimo es otra categor\'ia
y el espacio clasificante de $\G$ se define como
\[ B\G = |\Ll| \pcom. \]
Por ejemplo, dado un grupo finito $G$ con $p$-subgrupo de Sylow $S$, consideremos la categor\'ia $\Ll_S(G)$ cuyos objetos
son los subgrupos $G$--c\'entricos de $S$, con morfismos
\[ \Mor_{\Ll_S(G)}(P,Q) = N_G(P,Q)/O^p(C_G(P)). \]
Esta categor\'ia es un sistema c\'entrico de enlace para $\Ff_S(G)$ y satisface
\[ |\Ll_S(G)| \pcom \simeq BG \pcom. \]
Esto es consistente con la relaci\'on que establecimos previamente entre $\Ff_S(G)$ y $BG \pcom$, pero parecer\'ia cuestionar
la validez de la conjetura de Martino-Priddy, pues nos dice que es el sistema c\'entrico de enlace quien codifica el tipo de
homotop\'ia de $BG \pcom$. Sin embargo, esto es parte de la estrategia ideada para atacar esta conjetura, que consisti\'o
en demostrar que cada sistema de fusi\'on saturado admite un \'unico sistema c\'entrico de enlace, salvo isomorfismo. Es
decir, el esquema de la demostraci\'on es el siguiente:
\[ G \simeq_p H \Rightarrow \Ff_S(G) \cong \Ff_T(H) \Rightarrow \Ll_S(G) \cong \Ll_T(H) \Rightarrow BG \pcom \simeq BH \pcom . \]
La existencia y unicidad del sistema c\'entrico de enlace asociado a $\Ff_S(G)$ fue probada en \cite{O1} y \cite{O2}, posteriormente extendida
a sistemas de fusi\'on saturados en \cite{C} y re-demostrada en \cite{GL} sin usar la clasificaci\'on de grupos finitos simples.

Aunque ya queda fuera del alcance de este art\'iculo, nos gustar\'ia mencionar brevemente que tambi\'en existen dos maneras de 
tener una visi\'on local de los grupos compactos de Lie. La primera es mediante la teor\'ia de grupos $p$-compactos, que extiende
las ideas de toro maximal, grupo de Weyl y sistemas de ra\'ices para espacios de lazos con condiciones de finitud en el primo $p$.
La segunda es la teor\'ia de grupos $p$-locales compactos, similar en forma a la de grupos $p$-locales finitos. En este caso los
$p$-subgrupos de Sylow son grupos $p$-torales discretos y los conceptos de sistema de fusi\'on saturado y sistema c\'entrico
de enlace se extienden adecuadamente. El lector interesado puede consultar \cite{Grodal} para una vista panor\'amica de los grupos
$p$-compactos y \cite{DW2} para el art\'iculo fundacional de la teor\'ia. Para la teor\'ia de grupos $p$-locales compactos, se puede 
consultar \cite{BLO2}. 

\bibliographystyle{amsplain}

\bibliography{mybibfile}

\end{document}